\documentclass[11pt]{article}

\usepackage[margin=1in]{geometry}
\usepackage[T1]{fontenc}
\usepackage{lmodern}
\usepackage{microtype}
\usepackage{amsmath,amssymb,amsfonts,amsthm,bm}
\usepackage{mathtools}
\usepackage{physics}
\usepackage{booktabs}
\usepackage{xcolor}
\usepackage{graphicx}
\usepackage{hyperref}
\usepackage[capitalize,nameinlink]{cleveref}
\usepackage{authblk}
\usepackage{enumitem}

\usepackage{todonotes}

\newtheorem{theorem}{Theorem}
\newtheorem{lemma}[theorem]{Lemma}

\newtheorem{proposition}[theorem]{Proposition}
\newtheorem{remark}[theorem]{Remark}

\title{\textbf{Fisher-Information-Based Sensor Placement for \\ Structural Digital Twins: \\ Analytic Results and Benchmarks}
\thanks{This work is partially supported by the Office of Naval Research (ONR) under Award NO: N00014-24-
1-2147, NSF grant DMS-2408877, and the Air Force Office of Scientific Research (AFOSR) under Award NO: FA9550-25-1-0231.}
}
\date{}

	\author[1]{Harbir Antil\thanks{Email: \texttt{hantil@gmu.edu}}}	
    \author[2]{Animesh Jain\thanks{Email: \texttt{ajain34@gmu.edu}}}	
	\author[3]{Rainald L\"ohner\thanks{Email: \texttt{rlohner@gmu.edu}}}    
	\affil[1,2]{\small{Department of Mathematical Sciences and the Center for Mathematics and Artificial Intelligence (CMAI), George Mason University, Fairfax, VA 22030, USA.}}
	\affil[3]{\small{Department of Physics and the Computational Fluid Dynamics Center, George Mason University, Fairfax, VA 22030, USA.}}

\newcommand{\R}{\mathbb{R}}

\newcommand{\T}{\mathsf{T}}

\DeclareMathOperator*{\argmin}{arg\,min}

\begin{document}
\maketitle

\begin{abstract}
High-fidelity digital twins rely on the accurate assimilation of sensor data into
physics-based computational models.
In structural applications, such twins aim to identify spatially distributed
quantities—such as elementwise weakening fields, material parameters, or
effective thermal loads—by minimizing discrepancies between measured and
simulated responses subject to the governing equations of structural mechanics.
While adjoint-based methods enable efficient gradient computation for these
inverse problems, the quality and stability of the resulting estimates depend
critically on the choice of sensor locations, measurement types, and directions.

This paper develops a rigorous and implementation-ready framework for
Fisher-information-based sensor placement in adjoint-based finite-element
digital twins.
Sensor configurations are evaluated using a D-optimal design criterion derived
from a linearization of the measurement map, yielding a statistically meaningful
measure of information content.
We present matrix-free operator formulas for applying the Jacobian and its
adjoint, and hence for computing Fisher-information products
$Fv = J^\top R^{-1} Jv$ using only forward and adjoint solves.
Building on these operator evaluations, we derive explicit sensitivity
expressions for D-optimal sensor design with respect to measurement parameters
and discuss practical strategies for evaluating the associated
log-determinant objectives.
To complement the general framework, we provide analytically tractable sensor
placement results for a canonical one-dimensional structural model, clarifying
the distinction between detectability and localizability and proving that
D-optimal placement of multiple displacement sensors yields approximately
uniform spacing.
\end{abstract}

\section{Introduction}

Throughout the life cycle of engineering structures, material aging,
fatigue, and damage progressively degrade mechanical properties.
Digital twins—virtual replicas synchronized with measurements from the
physical system—enable continuous monitoring, system identification,
and predictive maintenance.
For structural systems, the identification task can be formulated as an
inverse problem in which one minimizes a misfit between computed and
measured quantities, such as displacements or strains, subject to the
partial differential equations governing structural mechanics.
Adjoint-based techniques make it possible to compute gradients of such
misfit functionals with respect to high-dimensional unknowns—such as
elementwise weakening fields $\alpha$, material parameters $\beta$, or
effective thermal loading parameters $f_{\Delta T}$—using only a small
number of forward and adjoint solves
\cite{FAiraudo_RLoehner_HAntil_2023a,TAnsari_RLohner_RWuchner_HAntil_2025a,
TAnsari_RLoehner_RWuechner_HAntil_SWarnakulasuriya_IAntonau_FAiraudo_2025a,
IAntonau_SWarnakulasuriya_TAnsari_RWuchner_RLohner_HAntil_2025,
IAntonau_SWarnakulasuriya_RWuechner_FAiraudo_RLoehner_HAntil_TAnsari_2025a,
RLoehner_Fairaudo_HAntil_RWuechner_SWarnakulasuriya_IAntonau_TAnsari_2025a}.

A persistent challenge for practical deployment of digital twins is the
limited number of sensors available for measurement.
Sparse or poorly placed sensors render the inverse problem severely
ill-posed and sensitive to noise, while redundant sensors increase cost
and complexity without commensurate gains in information content.
Consequently, the design of the sensor configuration itself—namely,
\emph{which locations, quantities, and directions should be measured}—is
a critical factor governing identifiability, stability, and predictive
capability.

This work develops a mathematically precise and implementation-ready
framework for sensor placement in adjoint-based finite-element digital
twins based on Fisher information.
Rather than attempting to solve a fully coupled bilevel design–inverse
optimization problem, sensor configurations are evaluated through a
Fisher-information-based criterion obtained by linearizing the
measurement map at a reference parameter.
Within this framework, Jacobian and adjoint-Jacobian operator actions,
and hence Fisher-information products of the form
$Fv = J^\top R^{-1}Jv$, can be computed in a matrix-free manner using only
forward and adjoint solves, while design sensitivities are expressed
explicitly in terms of the associated measurement operators.

\medskip
\noindent
\textbf{Contributions.}
The main contributions of this paper are:
\begin{itemize}[leftmargin=2em]
\item[(i)]
A Fisher-information-based formulation of sensor placement tailored to
adjoint-based finite-element digital twins for structural mechanics,
with a precise characterization of how sensor configurations enter
through discrete measurement operators.
\item[(ii)]
Matrix-free operator expressions for Jacobian and adjoint-Jacobian actions,
and hence for Fisher-information products of the form
$Fv = J^\top R^{-1} J v$, enabling information-based sensor evaluation
using only forward and adjoint solves consistent with existing inverse
problem solvers.
\item[(iii)]
A sensor-operator-centric sensitivity analysis showing that derivatives
of D-optimal design objectives with respect to sensor design parameters
depend only on variations of the measurement operators, and not on
second derivatives of the forward PDE solution.
\item[(iv)]
Analytically tractable benchmark results for a canonical one-dimensional
structural model, including a rigorous distinction between
\emph{detectability} and \emph{localizability}, and a closed-form proof
that D-optimal placement of multiple displacement sensors yields
(approximately) uniformly spaced sensor locations.
\end{itemize}

\paragraph{State of the art and related work.}
Sensor placement for structural systems has a long history in structural
dynamics, model updating, and structural health monitoring (SHM).
Early influential work proposed modal-based placement criteria, including
effective-independence (EI) strategies that iteratively select sensor locations
to improve the conditioning and information content of a targeted modal subspace.
These EI procedures are commonly expressed in terms of an information matrix and
its determinant/trace, and are closely aligned in structure with D- and A-optimal
design objectives for modal identification \cite{DCKammer_1991a}.

Information-theoretic formulations were subsequently developed for parametric
identification of structural models under noise.
Papadimitriou \cite{CPapadimitriou_2004a} proposed sensor placement based on an
information-entropy measure of parameter uncertainty and derived an asymptotic
approximation (large-data regime) showing that sensor configurations can be
ranked using only a nominal structural model, without requiring access to the
full time-history data at the design stage. The resulting design problems are
discrete and are typically addressed using sequential sensor placement
heuristics or combinatorial/metaheuristic search.

In parallel, optimal experimental design (OED) for large-scale inverse problems
governed by PDEs has developed scalable \emph{matrix-free} methodologies, since
explicit formation of Fisher, Hessian, or covariance matrices is often
infeasible. For a first monograph on OED that specifically addressed PDE-based inverse problems, we refer to Uci\'nski \cite{DUcinski_2005a}.  
Haber, Horesh, and Tenorio \cite{EHaber_LHoresh_LTenorio_2008a} developed design
criteria for linear ill-posed inverse problems that explicitly quantify
bias--variance tradeoffs of Tikhonov-regularized estimators via mean-square error
(MSE), and they evaluate the resulting objectives/gradients through operator
actions and stochastic trace estimators rather than dense matrix algebra.
A comprehensive account of Bayesian OED for PDE-constrained inverse problems,
including A-/D-optimality, expected information gain, infinite-dimensional
formulations, and randomized trace/log-determinant estimators, is given in the
topical review \cite{AAlexanderian_2021a}. See also \cite{AAlexanderian_NPetra_GStadler_OGhattas_2016a,WWu_PChen_OGhattas_2023a} for the article on related Bayesian inverse problems and ODE using adjoint approaches. Finally, we mention a comprehensive review article on OED \cite{XHuan_JJagalur_YMarzouk_2024a} and inverse problems \cite{Ghattas_Willcox_2021a}.

\paragraph{Positioning of this work.}
In contrast to the above OED literature---which is often presented at the level
of abstract parameter-to-observable maps and posterior covariance operators---we
focus on adjoint-based \emph{finite-element digital twin} workflows in structural
mechanics, where sensor configurations enter explicitly through discrete
measurement operators acting on the FE state (e.g., nodal displacement
interpolation and strain recovery at sensor locations).
Within a Fisher-information (local linearization) design criterion, we show how
Jacobian and adjoint-Jacobian \emph{operator actions}, and hence Fisher products
$Fv=J^\top R^{-1}Jv$, can be computed using forward/adjoint solves consistent
with existing inverse solvers.
We further provide analytically tractable benchmark results for a canonical 1D
bar model, including a rigorous detectability--localizability distinction and a
closed-form characterization of D-optimal placement of multiple displacement
sensors; these proofs complement the general framework and offer insight beyond
purely numerical design studies.

\section{Forward and Inverse Problem Formulation}

We consider a structure discretized by finite elements under $n$ prescribed
load cases.  
Let $u_i \in \R^{N_u}$ denote the vector of nodal displacements for the
$i$-th load case and $f_i$ the corresponding external force vector.  
The structural equilibrium equations read
\begin{equation}
  K(\alpha,\beta)\,u_i = f_i + f_{\Delta T}, \qquad i = 1,\dots,n,
  \label{eq:FE}
\end{equation}
where the stiffness matrix is assembled elementwise as
\begin{equation}
  K(\alpha,\beta)
  = \sum_{e=1}^{N_e} \alpha_e\, K_e(\beta),
  \label{eq:stiffness}
\end{equation}
and $\alpha_e \in [\alpha_{\min},1], \quad \alpha_{\min}>0$, represents the elementwise strength factor capturing
damage or weakening, while $\beta$ denotes material parameters such as the
Young’s modulus or Poisson’s ratio.
The term $f_{\Delta T}$ accounts for internal forces due to thermal effects. \\ 
We assume that the finite element discretization incorporates sufficient essential boundary conditions so that, for all admissible parameters $(\alpha,\beta)$, the stiffness matrix $K(\alpha,\beta)$ is symmetric positive definite. 
The element matrices $K_e(\beta)$ are assumed to depend smoothly on the material parameters $\beta$, ensuring Fréchet differentiability of the solution map $(\alpha,\beta,f_{\Delta T})\mapsto u_i$.\\
The parameters $\alpha$ and $\beta$ are assumed to belong to admissible sets $\mathcal{A}\subset\mathbb{R}^{N_e}$ and $\mathcal{B}\subset\mathbb{R}^{N_\beta}$, respectively, which are closed and convex.

\medskip
\noindent
The system response is compared to measured data at $m$ sensor locations
$x_j$, $j=1,\dots,m$.  
For each load case $i$, displacement and strain measurements are modeled via
interpolation operators $I^d_{ij}$ and $I^s_{ij}$ acting on the finite element
solution:
\begin{align*}
  u^{\mathrm{md}}_{ij} &\approx I^d_{ij}\,u_i,\\
  s^{\mathrm{ms}}_{ij} &\approx I^s_{ij}\,s_i(u_i),
\end{align*}
where $s_i(u_i)$ denotes the finite-element strain field derived from the
displacements. The operators $I^d_{ij}$ and $I^s_{ij}$ are assumed to be bounded linear maps,
corresponding to interpolation or localized averaging of the finite element
solution and strain field at sensor locations. 

\medskip
\noindent
Given measured data $\{u^{\mathrm{md}}_{ij}, s^{\mathrm{ms}}_{ij}\}$, the
parameter identification problem seeks $(\alpha,\beta,f_{\Delta T})$ that minimize
the weighted least-squares misfit
\begin{equation}
\label{eq:cost}
  I(u_1,\ldots,u_n,\alpha,\beta,f_{\Delta T})
  = \frac12 \sum_{i=1}^{n}\sum_{j=1}^{m}
    w^{\mathrm{md}}_{ij}\,\|u^{\mathrm{md}}_{ij} - I^d_{ij}u_i\|^2
    + \frac12 \sum_{i=1}^{n}\sum_{j=1}^{m}
    w^{\mathrm{ms}}_{ij}\,\|s^{\mathrm{ms}}_{ij} - I^s_{ij}s_i(u_i)\|^2.
\end{equation}
Here $\|\cdot\|$ denotes the Euclidean norm in the corresponding measurement space.
The weights
$w^{\mathrm{md}}_{ij}$ and $w^{\mathrm{ms}}_{ij}$ are strictly positive and
reflect measurement noise levels, yielding a statistically meaningful
least-squares formulation.
The problem is constrained by~\eqref{eq:FE}--\eqref{eq:stiffness}.
The corresponding Lagrangian reads
\begin{equation}
  \mathcal{L}(u_i,\tilde u_i,\alpha,\beta,f_{\Delta T})
  = I(u_i,\alpha,\beta,f_{\Delta T})
  + \sum_{i=1}^n \tilde u_i^\T \big( K(\alpha,\beta)u_i - f_i - f_{\Delta T}\big),
  \label{eq:lagrangian}
\end{equation}
where $\tilde u_i$ are the adjoint variables.
Variations of $\mathcal{L}$ yield the standard adjoint system, gradient
expressions, and update formulas as detailed in~\cite{FAiraudo_RLoehner_HAntil_2023a,TAnsari_RLoehner_RWuechner_HAntil_SWarnakulasuriya_IAntonau_FAiraudo_2025a}.

\section{Sensor Operator}

Let $S$ denote the \emph{sensor configuration}, encompassing all design
choices, including:
\begin{itemize}[leftmargin=2em]
  \item spatial positions $x_j$,
  \item measurement types (displacement, strain, acceleration, etc.),
  \item measurement directions or components, and
  \item weighting factors reflecting reliability or cost.
\end{itemize}
We assume that $S$ belongs to a prescribed admissible set $\mathcal{S}_{\rm ad}$ encoding
geometric, physical, and budgetary constraints.

\medskip
\noindent
The collection of interpolation operators $\{I^d_{ij},I^s_{ij}\}$ associated
with $S$ is represented by a single measurement operator
$M_i(S) \in \mathbb{R}^{m_i \times N_u}$ (here $m_i$ is the number of measurements for $i$-th load) acting linearly on the discrete
displacement field. The operator $M_i(S)$ incorporates bounded linear finite
element mappings (e.g.\ strain–displacement operators) composed with sensor
selection and weighting. The resulting observation model reads
\begin{equation}
  y_i(S,q) = M_i(S)\,u_i(q),
  \qquad
  y_i^{\mathrm{obs}} = y_i(S,q^\star) + \eta_i,
  \label{eq:measure}
\end{equation}
where $q = (\alpha,\beta,f_{\Delta T})$ denotes the unknown parameters,
$q^\star$ is the (unknown) true parameter vector, and $\eta_i$ denotes
measurement noise.

\medskip
\noindent
For a fixed sensor configuration $S$, the inverse problem minimizes the
data misfit
\begin{equation}
  J(q;S) =
  \frac12 \sum_{i=1}^{n}
  \|W_i\big(y_i^{\mathrm{obs}} - y_i(S,q)\big)\|^2,
  \label{eq:misfit}
\end{equation}
subject to~\eqref{eq:FE}. Here $W_i$ are symmetric positive definite weighting
matrices.

\section{Optimization Formulation and Fisher Information}

We denote by $q \in \mathbb{R}^{N_q}$ the vector of parameters to be identified,
for example
\[
  q = (\alpha,\beta,f_{\Delta T}),
\]
or a subset thereof depending on the application.
Let $q^\star$ denote the (unknown) true parameter and $q_0$ a reference
parameter about which we linearize the measurement map.
Typical choices for $q_0$ include:
\begin{itemize}[leftmargin=2em]
  \item a nominal or prior model (e.g.\ undamaged structure), or
  \item the current iterate of an inverse solver in an outer optimization loop.
\end{itemize}

\subsection{Conceptual bilevel structure}

At a conceptual level, optimal sensor placement for digital twins is naturally
formulated as a bilevel optimization problem.  
The \emph{inner} (lower-level) problem is the parameter identification task:
for a fixed sensor configuration $S$ and measured data $y^{\mathrm{obs}}$, one
computes an estimate $\hat q(S)$ by solving
\begin{equation}
  \hat q(S)
  \in \argmin_{q \in \mathbb{R}^{N_q}}
    \Big\{ J(q;S) + R(q) \Big\},
  \label{eq:lower-level}
\end{equation}
where $J(q;S)$ is the misfit \eqref{eq:misfit} and $R_q(q)$ denotes
regularization terms encoding prior information or smoothness of the parameters.
In practice, \eqref{eq:lower-level} is solved by an adjoint-based PDE-constrained
optimization algorithm \cite{HAntil_DPKouri_MDLacasse_DRidzal_2018a}.

The \emph{outer} (upper-level) problem aims to choose the sensor configuration
$S$ so that the estimator $\hat q(S)$ is as accurate and robust as possible.  
A generic formulation can be written as
\begin{equation}
  \min_{S \in \mathcal{S}_{\mathrm{ad}}}
    \Phi(S)
  \quad\text{with}\quad
  \Phi(S) := \mathbb{E}\big[
    \mathrm{loss}\big(q^\star,\hat q(S)\big)
  \big],
  \label{eq:upper-level}
\end{equation}
where $\mathcal{S}_{\mathrm{ad}}$ denotes the set of admissible sensor
configurations, $\mathrm{loss}(\cdot,\cdot)$ quantifies the quality of the
estimate (e.g.\ squared error, risk measures), and the expectation may be taken
over measurement noise, loading scenarios, and prior uncertainty in $q^\star$.

The combination of \eqref{eq:lower-level} and \eqref{eq:upper-level} yields a
bilevel problem: the outer design variable $S$ is optimized based on the
solution $\hat q(S)$ of an inner PDE-constrained optimization problem.
Solving this bilevel problem directly is prohibitively expensive.  
Instead, we approximate the statistical behavior of $\hat q(S)$ using the
local Fisher information and obtain a tractable single-level design problem in
terms of $F(S)$; this is the topic of the next subsections.

\begin{remark}[Offline design versus online inference]
\rm 
In a typical digital twin workflow, sensor placement is performed in an
\emph{offline} design phase, before or in parallel with the acquisition of
operational data.
Once a sensor layout $S^\star$ has been selected, the digital twin operates in
an \emph{online} phase in which inverse problems of the form
\eqref{eq:lower-level} are repeatedly solved for different measurements using
the fixed configuration $S^\star$.
The Fisher-information-based criteria discussed below provide an offline
surrogate for the expected performance of these online inversions.
\end{remark}

\subsection{Linearized statistical model and misfit}

For a given sensor configuration $S$, we stack all measurements over all load
cases (and, for transient problems, over all time steps) into a single vector
\[
  y(S,q) \in \mathbb{R}^{N_y}, \quad \mbox{where} \quad 
  \sum_{i=1}^n m_i = N_y ,
\]
where $N_y$ is the total number of observations. 
We assume the additive noise model
\begin{equation}
  y^{\mathrm{obs}} = y(S,q^\star) + \eta,
  \qquad
  \eta \sim \mathcal{N}(0,R),
  \label{eq:noise-model}
\end{equation}
where $R \in \mathbb{R}^{N_y \times N_y}$ is the covariance matrix of the
measurement noise.
In many applications $R$ is taken to be diagonal, with entries corresponding to
the variances of individual sensors.

In the least-squares formulation \eqref{eq:misfit} we wrote
\[
  J(q;S) = \frac12 \sum_{i=1}^{n}
  \big\| W_i\big(y_i^{\mathrm{obs}} - y_i(S,q)\big)\big\|^2,
\]
where $W_i$ are weighting matrices for each load case.
If we stack the $y_i$ into a global data vector $y(S,q)$ and define a
corresponding block-diagonal weighting matrix $W$, we obtain
\[
  J(q;S) = \frac12 \big\| W\big(y^{\mathrm{obs}} - y(S,q)\big)\big\|^2.
\]
Under the Gaussian noise model \eqref{eq:noise-model}, the (negative) log-likelihood
(up to an additive constant independent of $q$) is
\[
  \mathcal{L}(q;S)
  := \frac12 \big( y^{\mathrm{obs}} - y(S,q) \big)^\top R^{-1}
                   \big( y^{\mathrm{obs}} - y(S,q) \big),
\]
while the log-likelihood is
\[
  \ell(q; y^{\mathrm{obs}}, S) := \log p(y^{\mathrm{obs}}\mid q,S)
  = -\,\mathcal{L}(q;S) - \tfrac12 \log\det(2\pi R).
\]
Thus choosing $W$ with $W^\top W \approx R^{-1}$ makes the least-squares misfit
\eqref{eq:misfit} consistent with this statistical model.

To make contact with classical optimal experimental design, we linearize
$y(S,q)$ around the reference parameter $q_0$:
\begin{equation}
  y(S,q) \approx y(S,q_0) + J(S)\,(q - q_0),
  \qquad
  J(S) := \frac{\partial y(S,q)}{\partial q}\bigg|_{q=q_0}
  \in \mathbb{R}^{N_y \times N_q}.
  \label{eq:linearization}
\end{equation}
If $y(S,q)$ depends linearly on $q$, this representation is exact globally; for
nonlinear models it provides a local approximation around $q_0$.
The sensitivities encoded in $J(S)$ are computed efficiently by adjoint-based
methods.

\subsection{Fisher information in the linear--Gaussian setting}

The Fisher information at $q_0$ is
\[
  F(q_0,S)
  = \mathbb{E}\big[ s(q_0; y, S)\, s(q_0; y, S)^\top \big],
  \qquad s(q; y, S) := \nabla_q \ell(q; y, S),
\]
where the expectation is taken with respect to the distribution of the random
observation $y$ at $q=q_0$.
Under standard regularity assumptions (e.g.\ dominated convergence and
sufficient smoothness of $\ell$), this is equivalently
\[
  F(q_0,S)
  = -\,\mathbb{E}\!\left[ \nabla_q^2 \ell(q; y, S) \right]_{q=q_0}
  = \mathbb{E}\!\left[ \nabla_q^2 \mathcal{L}(q; S) \right]_{q=q_0},
\]
which is often more convenient for computation.

Using the linearized measurement model \eqref{eq:linearization} and the
Gaussian noise model \eqref{eq:noise-model}, the log-likelihood (up to an
additive constant) can be written as
\[
  \ell(q; y^{\mathrm{obs}}, S)
  \approx -\frac{1}{2}
  \big( y^{\mathrm{obs}} - y(S,q_0) - J(S)(q-q_0) \big)^\top
  R^{-1}
  \big( y^{\mathrm{obs}} - y(S,q_0) - J(S)(q-q_0) \big).
\]
Differentiating with respect to $q$ we obtain the score
\[
  \nabla_q \ell(q; y^{\mathrm{obs}}, S)
  \approx J(S)^\top R^{-1}
  \big( y^{\mathrm{obs}} - y(S,q_0) - J(S)(q-q_0) \big),
\]
and differentiating once more yields the Hessian
\[
  \nabla_q^2 \ell(q; y^{\mathrm{obs}}, S)
  \approx -\,J(S)^\top R^{-1} J(S),
\]
which is independent of the particular realization $y^{\mathrm{obs}}$ in the
linearized model.
Therefore, the Fisher information at $q_0$ is given by
\begin{equation}
  F(S) := F(q_0,S)
  = -\mathbb{E}\!\left[ \nabla_q^2 \log p(y^{\mathrm{obs}} \mid q,S) \right]_{q=q_0}
  = J(S)^\top R^{-1} J(S).
  \label{eq:fisher-final}
\end{equation}
In the exactly linear--Gaussian case this identity is exact; for nonlinear
forward models it represents the local Fisher information obtained from the
linearization \eqref{eq:linearization} and coincides with the Gauss--Newton
approximation to the Hessian of the negative log-likelihood.

Intuitively, the $k$-th column of $J(S)$ measures how the data change in
response to a unit perturbation of the $k$-th parameter.
Pre- and post-multiplication by $R^{-1}$ weights these changes by the
measurement noise, and the quadratic form
\[
  v^\top F(S) v = \big\| R^{-1/2} J(S) v \big\|^2
\]
quantifies how distinguishable a parameter perturbation $v$ is in the presence
of noise.

\subsection{Sensor design problem}

Recall that the Fisher information matrix $F(S)$ in \eqref{eq:fisher-final}
depends on the sensor configuration $S$ and on the chosen linearization point
$q_0$.
To keep the notation light we suppress this dependence and simply write
$F(S)$, with the understanding that all quantities are evaluated at a fixed
reference parameter $q_0$ (e.g.\ the nominal undamaged structure).

Recall $\mathcal{S}_{\mathrm{ad}}$ denote the set of admissible sensor
configurations, encoding constraints on the number, location, type, and cost of
sensors.
The sensor design problem is then posed as the optimization of a scalar
criterion of $F(S)$ over $S \in \mathcal{S}_{\mathrm{ad}}$.
Classical criteria from optimal experimental design include:
\begin{itemize}[leftmargin=2em]
  \item \textbf{A-optimality:} minimize the average variance,
  \[
    \min_{S \in \mathcal{S}_{\mathrm{ad}}}
      \operatorname{tr}\big( F(S)^{-1} \big);
  \]
  \item \textbf{D-optimality:} maximize the information volume,
  \[
    \max_{S \in \mathcal{S}_{\mathrm{ad}}}
      \log\det F(S);
  \]
  \item \textbf{E-optimality:} maximize the smallest eigenvalue,
  \[
    \max_{S \in \mathcal{S}_{\mathrm{ad}}}
      \lambda_{\min}\big(F(S)\big).
  \]
\end{itemize}
These objectives are well defined when $F(S)$ is symmetric positive definite,
which corresponds to local identifiability of the parameters at $q_0$ under the
sensor configuration $S$.
In practice one may add a small regularization $F(S) \mapsto F(S)+\varepsilon I$
to handle near-singular cases.

In this work we will mostly focus on the D-optimal design, which seeks to
maximize the determinant of $F(S)$, or equivalently to minimize
$-\log\det F(S)$:
\begin{equation}
  \min_{S \in \mathcal{S}_{\mathrm{ad}}}
    \Phi_{\mathrm{D}}(S),
  \qquad
  \Phi_{\mathrm{D}}(S) := -\log\det F(S).
  \label{eq:D-optimal}
\end{equation}

More generally, one may consider robust or scenario-based designs by replacing
$F(S)$ in \eqref{eq:D-optimal} with an average of Fisher information matrices
computed at several representative parameter values $q_0^{(1)},\dots,q_0^{(L)}$
(e.g.\ corresponding to different damage patterns or operating conditions),
\[
  \bar F(S) := \sum_{\ell=1}^L w_\ell F\big(S; q_0^{(\ell)}\big),
\]
and then optimizing a scalar criterion of $\bar F(S)$.
This allows the sensor layout to perform well across a range of plausible
scenarios without requiring knowledge of the true parameter $q^\star$.

In subsequent sections we will derive 
expressions for the
sensitivities of $\Phi_{\mathrm{D}}(S)$ (and related criteria) with respect to
the sensor configuration $S$, and present practical algorithms for computing
near-optimal sensor layouts in the finite element setting.

\section{Adjoint-Based Sensitivities for Sensor Placement}

In this section we derive the expressions needed to compute the gradient of the
D-optimal objective
\begin{equation}
  \Phi_{\mathrm{D}}(S)
  = -\log\det F(S),
  \qquad
  F(S) = J(S)^\top R^{-1} J(S),
  \label{eq:D-objective}
\end{equation}
with respect to the design variables defining the sensor configuration~$S$.
The dependence of $J(S)$ on~$S$ enters through the measurement operators
$M_i(S)$ that extract or interpolate state quantities at the sensor locations.

\subsection{Variation of the D-optimal objective}

We recall the D-optimal sensor design objective
\begin{equation}
  \Phi_{\mathrm{D}}(S)
  = -\log\det F(S),
  \qquad
  F(S) = J(S)^\top R^{-1} J(S),
  \label{eq:D-objective-again}
\end{equation}
where $F(S)$ is symmetric positive definite for admissible sensor
configurations $S$.
The first variation of $\Phi_{\mathrm{D}}$ with respect to a perturbation
$\mathrm{d}F(S)$ is obtained from the standard identity
$\mathrm{d}\log\det F = \operatorname{tr}(F^{-1}\mathrm{d}F)$:
\begin{equation}
  \mathrm{d}\Phi_{\mathrm{D}}
  = -\operatorname{tr}\big( F(S)^{-1} \,\mathrm{d}F(S) \big).
  \label{eq:dPhiF}
\end{equation}

Since $F(S) = J(S)^\top R^{-1} J(S)$ and $R$ does not depend on $S$, the
variation of $F$ induced by a perturbation $\mathrm{d}J(S)$ is
\begin{equation}
  \mathrm{d}F(S)
  = \mathrm{d}\big(J^\top R^{-1} J\big)
  = (\mathrm{d}J)^\top R^{-1} J
    + J^\top R^{-1} (\mathrm{d}J),
  \label{eq:dF}
\end{equation}
where we use the shorthand $J = J(S)$.
Substituting \eqref{eq:dF} into \eqref{eq:dPhiF} yields
\begin{align*}
  \mathrm{d}\Phi_{\mathrm{D}}
  &= -\operatorname{tr}\!\big( F^{-1} (\mathrm{d}J)^\top R^{-1} J \big)
     -\operatorname{tr}\!\big( F^{-1} J^\top R^{-1} (\mathrm{d}J) \big).
\end{align*}
Using the facts that $\operatorname{tr}(X) = \operatorname{tr}(X^\top)$ and
$\operatorname{tr}(ABC) = \operatorname{tr}(BCA)$ (cyclic invariance of the
trace), we first transpose the first term:
\[
  \operatorname{tr}\!\big( F^{-1} (\mathrm{d}J)^\top R^{-1} J \big)
  = \operatorname{tr}\!\big( J^\top R^{-1} (\mathrm{d}J) F^{-1} \big),
\]
and then cyclically permute the right-hand side to obtain
\[
  \operatorname{tr}\!\big( J^\top R^{-1} (\mathrm{d}J) F^{-1} \big)
  = \operatorname{tr}\!\big( F^{-1} J^\top R^{-1} (\mathrm{d}J)  \big).
\]
Hence both contributions are identical, and we obtain
\begin{equation}
  \mathrm{d}\Phi_{\mathrm{D}}
  = -2\,\operatorname{tr}\!\big(
      F(S)^{-1} J(S)^\top R^{-1}\, \mathrm{d}J(S)
    \big).
  \label{eq:dPhi_dJ-correct}
\end{equation}
It is convenient to introduce the ``adjoint weight'' matrix
\begin{equation}
  B(S) := R^{-1} J(S) F(S)^{-1} \in \mathbb{R}^{N_y \times N_q},
  \label{eq:B-def}
\end{equation}
for which $B(S)^\top = F(S)^{-1} J(S)^\top R^{-1}$.
Then \eqref{eq:dPhi_dJ-correct} can be written more compactly as
\begin{equation}
  \mathrm{d}\Phi_{\mathrm{D}}
  = -2\,\operatorname{tr}\!\big( B(S)^\top\, \mathrm{d}J(S) \big).
  \label{eq:dPhi_BdJ}
\end{equation}
Thus, the sensitivity of the D-optimal objective with respect to any design
parameter enters only through the directional derivative $\mathrm{d}J(S)$ of
the sensitivity matrix $J(S)$, and no second derivatives of the forward map
with respect to the parameters $q$ are required.
In the following we express $\mathrm{d}J(S)$ in terms of forward and adjoint
solutions of the governing finite element equations.

\subsection{Structure of the Jacobian $J(S)$}

For each load case $i=1,\dots,n$, the measurement vector is
\[
  y_i(S,q) = M_i(S)\,u_i(q),
\]
where $M_i(S)$ is the measurement (interpolation, extraction, or averaging)
operator determined by the sensor configuration $S$. 
The state $u_i(q)\in\mathbb{R}^{N_u}$ solves the static equilibrium equations
\begin{equation}
  K(\alpha,\beta)\,u_i(q)
  = f_i(q) + f_{\Delta T}(q),
  \label{eq:FE-state}
\end{equation}
with $q$ collecting all inversion parameters (e.g.\ spatially varying material
weakening parameters, thermal loading parameters, global scalars, etc.).

For Fisher information we require the Jacobian of the \emph{measurement map}
with respect to the parameters $q$, evaluated at a reference $q_0$:
\[
  J(S)
  :=
  \frac{\partial y(S,q)}{\partial q}\Big|_{q=q_0}
  \;\in\; \mathbb{R}^{N_y\times N_q},
\]
where $y(S,q)$ stacks the $y_i(S,q)$ over all load cases.
Since the sensor configuration $S$ is fixed here, the measurement operators
$M_i(S)$ do not depend on $q$.

A parameter perturbation $\delta q$ induces a state perturbation
$\delta u_i\in\mathbb{R}^{N_u}$ satisfying the linearized equilibrium
equation
\begin{equation}
  K(\alpha,\beta)\,\delta u_i
  =
  \Big(       
      \frac{\partial f_{\Delta T}}{\partial q}
      - \frac{\partial K(\alpha,\beta)}{\partial q}\,u_i
  \Big)\,\delta q
  \;:=\;C_i\,\delta q,
  \label{eq:linstate}
\end{equation}
where $C_i\in\mathbb{R}^{N_u\times N_q}$ is the parameter-to-RHS sensitivity
operator.
Thus
\[
  \delta y_i
  = M_i(S)\,\delta u_i
  = M_i(S)\,U_i\,\delta q,
\]
where $U_i$ is the matrix of state sensitivities,
\[
  U_i := \frac{\partial u_i(q)}{\partial q}\Big|_{q=q_0}
        \in\mathbb{R}^{N_u\times N_q},
\quad
  K(\alpha,\beta)\,U_i = C_i.
\]
Stacking all load cases yields the block Jacobian
\[
  J(S)
  =
  \begin{bmatrix}
    M_1(S)\,U_1 \\
    \vdots \\
    M_n(S)\,U_n
  \end{bmatrix}
  \in\mathbb{R}^{N_y\times N_q}.
\]

The $k$-th column of $J(S)$ corresponds to the response of the full sensor
dataset to a unit perturbation of the $k$-th parameter component $q_k$ and is
obtained by solving~\eqref{eq:linstate} with $\delta q=e_k$.

Direct formation of $J(S)$ is practical when $N_q$ is small (e.g.\ global
parameters).  
When $q$ is high-dimensional (e.g.\ spatially distributed weakening fields),
explicit construction of $U_i$ and $J(S)$ becomes prohibitive.
In such cases we rely on the \emph{operator view} of $J$, applying $J$ and
$J^\top$ without forming them explicitly.

\paragraph{Adjoint/operator view (matrix-free).}

The Jacobian $J(S)$ is the derivative of the stacked measurement vector
$y(S,q)$ with respect to the parameter vector $q$, evaluated at $q_0$.
Given a direction $v\in\mathbb{R}^{N_q}$, the product $Jv$ is the
directional derivative of $y$ at $q_0$ in direction $v$:
\[
  Jv
  = \left.\frac{\mathrm{d}}{\mathrm{d}\varepsilon}
           y(S,q_0 + \varepsilon v)\right|_{\varepsilon=0}.
\]
We now show how to compute $Jv$, $J^\top w$, and $Fz:=J^\top R^{-1}Jz$
without forming $J$ explicitly.

\medskip\noindent
\textbf{1. Forward (linearized) solve for $Jv$.}
Fix $S$ and a direction $v\in\mathbb{R}^{N_q}$.
For each load case $i$, the state $u_i(q)$ solves
\[
  K(\alpha,\beta)\,u_i(q)
  = f_i(q) + f_{\Delta T}(q).
\]
Perturb $q$ to $q_0 + \varepsilon v$ and differentiate at $\varepsilon=0$.
Define the state variation
\[
  \delta u_i
  := \left.\frac{\mathrm{d}}{\mathrm{d}\varepsilon}
        u_i(q_0 + \varepsilon v)\right|_{\varepsilon=0}.
\]
Then $\delta u_i$ satisfies \eqref{eq:linstate}. 
Since the sensor configuration $S$ is fixed, the measurement operator
$M_i(S)$ is independent of $q$, and the resulting variation of the
measurements for load case $i$ is
\[
  (Jv)_i
  := \left.\frac{\mathrm{d}}{\mathrm{d}\varepsilon}
             y_i(S,q_0 + \varepsilon v)\right|_{\varepsilon=0}
   = M_i(S)\,\delta u_i.
\]
Stacking over all load cases yields $Jv\in\mathbb{R}^{N_y}$.
Thus, to compute $Jv$ we:
\begin{enumerate}[leftmargin=2em]
  \item assemble the right-hand sides $C_i v$ for $i=1,\dots,n$,
  \item solve $K(\alpha,\beta)\,\delta u_i = C_i v$,
  \item form $(Jv)_i = M_i(S)\,\delta u_i$ and stack.
\end{enumerate}
This requires $n$ linear solves with the stiffness matrix $K(\alpha,\beta)$.

\medskip\noindent
\textbf{2. Adjoint solve for $J^\top w$.}
Given $w\in\mathbb{R}^{N_y}$, decompose it as
$w=(w_1,\dots,w_n)$, where $w_i$ collects the components associated
with load case $i$.

To derive the adjoint representation of $J^\top w$, let
$w = (w_1,\dots,w_n)$ collect the contributions from all load cases and recall
that for any $v\in\mathbb{R}^{N_q}$,
\[
  Jv
  = \left.\frac{\mathrm{d}}{\mathrm{d}\varepsilon}
           y(S,q_0 + \varepsilon v)\right|_{\varepsilon=0}.
\]
For load case $i$, the corresponding state and measurement variations satisfy
\[
  K(\alpha,\beta)\,\delta u_i = C_i v,
  \qquad
  (Jv)_i = M_i(S)\,\delta u_i,
\]
with $C_i$ defined in \eqref{eq:linstate}.
Thus
\[
  w^\top (Jv)
  = \sum_{i=1}^n w_i^\top (Jv)_i
  = \sum_{i=1}^n (M_i(S)^\top w_i)^\top \delta u_i.
\]
Introduce adjoint variables $\lambda_i \in \mathbb{R}^{N_u}$ solving
\begin{equation}
  K(\alpha,\beta)^\top \lambda_i = M_i(S)^\top w_i.
  \label{eq:adjoint-op-deriv}
\end{equation}
Then
\[
  (M_i(S)^\top w_i)^\top \delta u_i
  = (K(\alpha,\beta)^\top \lambda_i)^\top \delta u_i
  = \lambda_i^\top K(\alpha,\beta)\,\delta u_i
  = \lambda_i^\top C_i v.
\]
Summing over load cases yields
\[
  w^\top (Jv)
  = \sum_{i=1}^n \lambda_i^\top C_i v
  = \sum_{i=1}^n  \Big( C_i^\top \lambda_i \Big)^\top v
  = \Big( \sum_{i=1}^n C_i^\top \lambda_i \Big)^\top v ,
\]
where we have used the linearity to bring summation inside. 

Since this holds for all $v\in\mathbb{R}^{N_q}$, we conclude that
\begin{equation}
  J^\top w = \sum_{i=1}^n C_i^\top \lambda_i.
  \label{eq:JTw-final}
\end{equation}
This shows that the application of $J^\top$ to a vector $w$ can be evaluated
using $n$ adjoint solves \eqref{eq:adjoint-op-deriv} with the same stiffness matrix
$K(\alpha,\beta)$.

\medskip\noindent
\textbf{3. Applying $F = J^\top R^{-1} J$.}
For any $z\in\mathbb{R}^{N_q}$ we have
\[
  Fz = J^\top (R^{-1} Jz).
\]
Using the procedures above to evaluate $Jz$ and $J^\top(\cdot)$, we obtain
$Fz$ without forming $J$ or $F$ explicitly:
\[
  y = Jz,\qquad
  \tilde y = R^{-1} y,\qquad
  Fz = J^\top \tilde y.
\]
Hence, one application of $F$ requires $n$ solves with $K(\alpha,\beta)$ and
$n$ solves with $K(\alpha,\beta)^\top$.

\medskip\noindent
\textbf{Notes.}
\begin{itemize}[leftmargin=2em]

\item[(i)]  
The matrices $C_i$ in~\eqref{eq:linstate} contain the parameter-to-RHS 
sensitivities
\[
  C_i
  = 
    \frac{\partial f_{\Delta T}}{\partial q}
    -\frac{\partial K(\alpha,\beta)}{\partial q}\,u_i,
\]
and are assembled elementwise using the same derivatives of $K$, $f_i$, and 
$f_{\Delta T}$ that appear in the inverse solver.  
No additional model derivatives are required.

\item[(ii)]  
When $N_q$ is modest (small number of unknown parameters), it is practical to 
assemble the Fisher matrix
\[
  F = J^\top R^{-1} J
\]
explicitly and apply a dense factorization (e.g.\ Cholesky) to evaluate 
$\log\det F$ and $F^{-1}$.

\item[(iii)]
When $N_q$ is large (e.g.\ spatially distributed parameters), we keep
$F = J^\top R^{-1} J$ matrix-free and access it only through the operator
application
\[
  Fz = J^\top R^{-1}(Jz),
\]
which uses one forward and one adjoint sweep per product.  
This is sufficient to:
\begin{itemize}
  \item solve linear systems $Fx = g$ by a Krylov method (e.g.\ CG) whenever
        $F^{-1}g$ is needed (A-optimality or explicit
        formation of $B = R^{-1}JF^{-1}$), and
  \item approximate the D-optimal objective via stochastic Lanczos quadrature,
        using the identity $\log\det F = \operatorname{tr}(\log F)$ and
        repeated matrix--vector products with~$F$.
\end{itemize}

\item[(iv)]  
The expressions for $Jv$ and $J^\top w$ depend only on derivatives with 
respect to the \emph{parameters} $q$ and do not involve derivatives with 
respect to the \emph{sensor design variables} $S$.  
Sensitivity of the D-optimal objective
\[
  \Phi_{\mathrm{D}}(S) = -\log\det F(S)
\]
with respect to $S$ requires a second layer of calculus to evaluate 
$\mathrm{d}J(S)$; this is derived in the next subsection.

\end{itemize}

\subsection{Sensitivities of the D-optimal criterion with respect to sensor design}

We now derive expressions for the derivative of the D-optimal objective
\[
  \Phi_{\mathrm{D}}(S)
  = -\log\det F(S),
  \qquad
  F(S) = J(S)^\top R^{-1} J(S),
\]
with respect to continuous design parameters that define the sensor
configuration $S$.
Typical example include sensor positions (for pointwise or interpolated measurements).

Recall from \eqref{eq:fisher-final} and \eqref{eq:dPhi_BdJ} that the first
variation of $\Phi_{\mathrm{D}}$ with respect to a perturbation $\mathrm{d}J(S)$
is
\begin{equation}
  \mathrm{d}\Phi_{\mathrm{D}}
  = -2\,\operatorname{tr}\!\big( B(S)^\top\, \mathrm{d}J(S) \big),
  \qquad
  B(S) := R^{-1} J(S) F(S)^{-1} \in \mathbb{R}^{N_y \times N_q}.
  \label{eq:dPhi_BdJ-again}
\end{equation}
Here $B(S)$ is the ``adjoint weight'' matrix and depends on $S$ only through
$J(S)$ and $F(S)$.
Once Jacobian $J(S)$ and Fisher-information operator $F(S)$ actions are available, either through explicit assembly (for moderate parameter dimension)
or through matrix-free linear solves with $F$, $B(S)$ can be evaluated by solving linear systems with $F(S)$ or by explicit multiplication if $F(S)$ is
assembled.

\subsubsection{Block structure and dependence on $S$}

We first make explicit how $J(S)$ depends on the sensor configuration through
the measurement operators $M_i(S)$.
From the linearized state equation \eqref{eq:linstate}, we have
\[
  K(\alpha,\beta)\,U_i = C_i,
  \qquad
  C_i
  = 
    \frac{\partial f_{\Delta T}}{\partial q}
    -\frac{\partial K(\alpha,\beta)}{\partial q}\,u_i,
\]
so that the Jacobian block for load case $i$ is
\[
  J_i(S) = M_i(S)\,U_i \in \mathbb{R}^{m_i \times N_q},
\]
where $m_i$ denotes the number of measurements for load case $i$.
Stacking over all load cases,
\[
  J(S)
  =
  \begin{bmatrix}
    J_1(S) \\
    \vdots \\
    J_n(S)
  \end{bmatrix}
  =
  \begin{bmatrix}
    M_1(S)\,U_1 \\
    \vdots \\
    M_n(S)\,U_n
  \end{bmatrix}
  \in \mathbb{R}^{N_y \times N_q}.
\]
Note that $U_i$ depends on the parameter vector $q$ but is independent of the
sensor configuration $S$ under our ``passive sensor'' assumption: $S$ affects
only the measurement operators $M_i(S)$ and not the PDE itself.

Similarly, we partition the adjoint weight matrix $B(S)$ into blocks
corresponding to each load case:
\[
  B(S)
  =
  \begin{bmatrix}
    \mathcal{B}_1(S) \\
    \vdots \\
    \mathcal{B}_n(S)
  \end{bmatrix},
  \qquad
  \mathcal{B}_i(S) \in \mathbb{R}^{m_i \times N_q}.
\]
Then
\[
  \operatorname{tr}\!\big( B(S)^\top \mathrm{d}J(S) \big)
  = \sum_{i=1}^n \operatorname{tr}\!\big(
      \mathcal{B}_i(S)^\top\,\mathrm{d}J_i(S)
    \big),
\]
and~\eqref{eq:dPhi_BdJ-again} becomes
\begin{equation}
  \mathrm{d}\Phi_{\mathrm{D}}
  = -2 \sum_{i=1}^n
      \operatorname{tr}\!\big(
        \mathcal{B}_i(S)^\top\,\mathrm{d}J_i(S)
      \big).
  \label{eq:dPhi-sum}
\end{equation}

Since $J_i(S) = M_i(S) U_i$ and $U_i$ is independent of $S$, the variation of
$J_i(S)$ induced by a perturbation of the design $S$ is
\[
  \mathrm{d}J_i(S)
  = (\mathrm{d}M_i(S))\,U_i.
\]
Substituting this into \eqref{eq:dPhi-sum} gives
\begin{equation}
  \mathrm{d}\Phi_{\mathrm{D}}
  = -2 \sum_{i=1}^n
      \operatorname{tr}\!\big(
        \mathcal{B}_i(S)^\top\,(\mathrm{d}M_i(S))\,U_i
      \big).
  \label{eq:dPhi-dM}
\end{equation}
This identity shows that once $U_i$ and the block weights $\mathcal{B}_i(S)$
are available, the sensitivity of the D-optimal criterion with respect to
sensor design is governed entirely by the variations of the measurement
operators $M_i(S)$.

\subsubsection{Derivative with respect to a scalar design parameter}

Let $\theta$ be a scalar design parameter (e.g.\ a weight for a particular
candidate sensor, or one component of a sensor position vector). 
We assume that the dependence $S \mapsto M_i(S)$ is sufficiently smooth so that
the derivatives $\partial M_i(S)/\partial \theta$ exist for the design
parameters of interest.
Then $\mathrm{d}M_i(S)$ in \eqref{eq:dPhi-dM} becomes
\[
  \mathrm{d}M_i(S) = \frac{\partial M_i(S)}{\partial \theta}\,\mathrm{d}\theta,
\]
and we obtain the directional derivative
\[
  \mathrm{d}\Phi_{\mathrm{D}}
  = \frac{\partial \Phi_{\mathrm{D}}}{\partial \theta}\,\mathrm{d}\theta
  = -2 \sum_{i=1}^n
      \operatorname{tr}\!\big(
        \mathcal{B}_i(S)^\top\,
        \frac{\partial M_i(S)}{\partial \theta}\,
        U_i
      \big)\,\mathrm{d}\theta.
\]
Hence the derivative of the D-optimal objective with respect to $\theta$ is
\begin{equation}
  \frac{\partial \Phi_{\mathrm{D}}}{\partial \theta}
  = -2 \sum_{i=1}^n
      \operatorname{tr}\!\big(
        \mathcal{B}_i(S)^\top\,
        \frac{\partial M_i(S)}{\partial \theta}\,
        U_i
      \big).
  \label{eq:grad-theta}
\end{equation}

Formula \eqref{eq:grad-theta} is completely general and covers a wide range of
sensor design parametrizations:
\begin{itemize}[leftmargin=2em]
  \item \emph{Sensor weights.}  
    If $M_i(S)$ is constructed as a weighted sum of candidate sensor rows,
    $M_i(S) = \sum_j \theta_j M_{i,j}$, then
    $\partial M_i(S)/\partial \theta_j = M_{i,j}$ and
    \eqref{eq:grad-theta} reduces to a simple weighted Frobenius inner
    product of $\mathcal{B}_i(S)$ and $M_{i,j} U_i$.
  \item \emph{Sensor positions.}  
    For point sensors defined by interpolation from nodal values,
    $M_i(S)$ consists of rows of shape functions evaluated at sensor
    coordinates $x_{\mathrm{sens}}(\theta)$.
    Then $\partial M_i(S)/\partial \theta$ involves gradients of the shape
    functions and the Jacobian $\partial x_{\mathrm{sens}}/\partial \theta$,
    which are standard ingredients in finite element codes.
\end{itemize}
In all cases, no additional PDE solves are required beyond those used to
compute $U_i$, $J(S)$, $F(S)$, and $B(S)$.
The design gradient \eqref{eq:grad-theta} is obtained by inexpensive local
operations on the measurement operators and the already-computed sensitivity
matrices $U_i$.

\begin{remark}[On the role of adjoints and parameter dimension]
\rm 
In inverse problems one typically seeks the gradient of a scalar misfit
functional with respect to the parameter vector $q$, and adjoint techniques
are used to avoid forming the Jacobian $J(S)=\partial y/\partial q$.
In contrast, D-optimal sensor design depends on the full Fisher information
matrix $F(S) = J(S)^\top R^{-1} J(S)$, which aggregates sensitivities in
\emph{all} parameter directions.
Thus, exact evaluation of $F(S)$ (and its log-determinant) inherently requires
access to the sensitivities in all components of $q$, for example through the
state sensitivity matrices $U_i$ solving $K U_i = C_i$.
In this work we focus on parameter vectors of moderate dimension, for which
storing $U_i$ and $F(S)$ is feasible.
For very high-dimensional parameters, additional approximations (e.g. reduced
parameter subspaces or randomized low-rank approximations of $F$) would be
required and are left for future work.
\end{remark}

\subsubsection{Summary of steps for gradient evaluation}

For a given sensor configuration $S$ and parameter vector $q_0$:

\begin{enumerate}[leftmargin=2em]
  \item For each load case $i=1,\dots,n$:
    \begin{enumerate}
      \item Solve the forward problem
        $K(\alpha,\beta)\,u_i(q_0) = f_i(q_0) + f_{\Delta T}(q_0)$.
      \item Assemble $C_i$ as in \eqref{eq:linstate} and solve the linearized
            state equation $K(\alpha,\beta)\,U_i = C_i$ to obtain
            $U_i = \partial u_i/\partial q$.
      \item Form the Jacobian block $J_i(S) = M_i(S)\,U_i$.
    \end{enumerate}
  \item Assemble the Fisher matrix $F(S) = J(S)^\top R^{-1} J(S)$ and compute
        the D-optimal objective $\Phi_{\mathrm{D}}(S) = -\log\det F(S)$.
  \item Compute the adjoint weight matrix $B(S) = R^{-1} J(S) F(S)^{-1}$ and
        extract its load-case blocks $\mathcal{B}_i(S)$.
  \item For each scalar design parameter $\theta$ of interest, evaluate the
        derivatives $\partial M_i(S)/\partial \theta$ and form
        $\partial \Phi_{\mathrm{D}}/\partial \theta$ using \eqref{eq:grad-theta}.
\end{enumerate}

This procedure yields the gradient of the D-optimal sensor design objective
with respect to arbitrary smooth parametrizations of the sensor configuration,
using only linear algebra and model ingredients already present in the inverse
problem solver.

\section{Didactic example: one load, one sensor, elementwise weaknesses}

We now illustrate how the Fisher-information framework recovers a simple
``closed-form'' sensor placement rule in a simplified setting.
Consider a single load case with parameters
\[
  \alpha = (\alpha_1,\dots,\alpha_{N_e}) \in [\alpha_{\rm min},1]^{N_e},
\]
where $\alpha_e$ scales the stiffness of element $e$.  Let
$u(\alpha)\in\mathbb{R}^{N_u}$ denote the corresponding finite element
state, solving
\[
  K(\alpha)\,u(\alpha) = f,
\]
for a fixed right-hand side $f$.  We assume a reference parameter
$\alpha_0$ (e.g.\ the undamaged configuration) and consider small
perturbations around $\alpha_0$.

Consider a set of candidate point (or local) sensors indexed by
$j=1,\dots,m$.  For a given sensor $j$, the scalar measurement is
\[
  y_j(\alpha) = M_j\,u(\alpha),
\]
where $M_j\in\mathbb{R}^{1\times N_u}$ selects or interpolates a
displacement/strain component at the sensor location $x_j$.
Let
\[
  U := \frac{\partial u(\alpha)}{\partial \alpha}
       \Big|_{\alpha=\alpha_0}
  \in \mathbb{R}^{N_u\times N_e}
\]
denote the state sensitivity matrix, whose $e$-th column is the
sensitivity of the state to the stiffness scaling of element $e$.
Then the Jacobian row of sensor $j$ with respect to the elementwise
weaknesses is
\[
  J_j
  := \frac{\partial y_j(\alpha)}{\partial \alpha}
     \Big|_{\alpha=\alpha_0}
  = M_j U \in \mathbb{R}^{1\times N_e}.
\]
The entry $(J_j)_e$ represents the sensitivity of the $j$-th sensor to
a perturbation of element $e$.

We assume additive scalar Gaussian noise on sensor $j$,
\[
  y_j^{\mathrm{obs}}
  = y_j(\alpha^\star) + \eta,
  \qquad
  \eta \sim \mathcal{N}(0,\sigma^2),
\]
for some unknown true parameter vector $\alpha^\star$ and known
variance~$\sigma^2>0$.

\begin{proposition}[One load, one sensor, elementwise weaknesses]
\label{prop:one-sensor}
Suppose that at most one element is weakened, i.e.\ we are interested in
detecting deviations of a single scalar parameter $\alpha_k$ from its
reference value while all other components of $\alpha$ are held fixed.
For sensor $j$, the Fisher information for the scalar parameter
$\alpha_k$ is
\[
  F_{j,k} = \frac{1}{\sigma^2}
            \bigg(\frac{\partial y_j}{\partial \alpha_k}\Big|_{\alpha_0}\bigg)^2
          = \frac{1}{\sigma^2}\,(J_j)_k^2.
\]
If we seek a single sensor location that is robust across all possible
single-element weakenings, a natural design criterion is to maximize the
average Fisher information
\[
  \sum_{k=1}^{N_e} F_{j,k}.
\]
This yields the optimal one-sensor location
\begin{equation}
  j^\star \in \arg\max_{j=1,\dots,m} \|J_j\|_2^2
           = \arg\max_{j=1,\dots,m}
             \sum_{k=1}^{N_e}
             \bigg(\frac{\partial y_j}{\partial \alpha_k}\Big|_{\alpha_0}\bigg)^2.
  \label{eq:one-sensor-rule}
\end{equation}
In words, the optimal sensor is placed where the measurement is
simultaneously most sensitive (in an $\ell_2$ sense) to the collection
of elementwise weaknesses.
\end{proposition}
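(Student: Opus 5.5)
The plan is to specialize the general Fisher-information identity \eqref{eq:fisher-final} to the scalar setting of the proposition, and then reduce the robust design criterion to an elementary maximization over the finite candidate set.

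\textbf{Step 1: the scalar Fisher information.} Fix a sensor $j$ and an element index $k$. We treat $\alpha_k$ as the only unknown, with all other components frozen at $\alpha_0$. The measurement map is then the scalar function $\alpha_k \mapsto y_j(\alpha) = M_j u(\alpha)$. Its derivative at $\alpha_0$ is obtained by the chain rule with $M_j$ independent of $\alpha$:
\[
  \frac{\partial y_j}{\partial \alpha_k}\Big|_{\alpha_0} = M_j U e_k = (J_j)_k .
\]
Here the $k$-th column of $U$ is characterized by the linearized equation \eqref{eq:linstate} with $\delta q = e_k$. The noise covariance is the $1\times 1$ matrix $R = \sigma^2$. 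Substituting $J = (J_j)_k$ and $R^{-1} = \sigma^{-2}$ into \eqref{eq:fisher-final} gives $F_{j,k} = \sigma^{-2} (J_j)_k^2$. If $y_j$ is nonlinear in $\alpha_k$, this is the local Fisher information from the linearization \eqref{eq:linearization}, as discussed after \eqref{eq:fisher-final}. One could alternatively verify it directly: compute the score $\sigma^{-2}(J_j)_k\,\eta$ and take $\mathbb{E}[\eta^2] = \sigma^2$.

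\textbf{Step 2: aggregating and maximizing.} Summing over $k$ gives
\[
  \sum_{k=1}^{N_e} F_{j,k} = \sigma^{-2}\sum_k (J_j)_k^2 = \sigma^{-2}\|J_j\|_2^2 .
\]
The average over $k$ differs only by the positive factor $1/N_e$. Since $\sigma^{-2}>0$ is independent of $j$, maximizing the average Fisher information over $j\in\{1,\dots,m\}$ is equivalent to maximizing $\|J_j\|_2^2$. A maximizer exists because the candidate set is finite, which yields \eqref{eq:one-sensor-rule}.

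I could also note that $\sum_k F_{j,k} = \operatorname{tr}(F_j)$, where $F_j = \sigma^{-2}J_j^\top J_j$ is the full (rank-one) Fisher matrix. This explains why the criterion is a trace (A-type) rather than a D-type criterion: $\det F_j = 0$ whenever $N_e>1$.

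\textbf{Main obstacle.} None of the steps is technically hard. The one point needing care is justifying that the single-parameter Fisher information is the $(k,k)$ entry of the full matrix, i.e.\ that freezing the other components does not alter the derivative. This follows from the definition of the partial derivative.
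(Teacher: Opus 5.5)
Your proposal is correct and is essentially the paper's argument. The paper uses exactly the direct computation you list as an alternative: it forms the Gaussian log-likelihood, takes the score $\sigma^{-2}(y_j^{\mathrm{obs}}-\mu(\theta_0))\,s_{j,k}$, and uses $\mathbb{E}[(y_j^{\mathrm{obs}}-\mu(\theta_0))^2]=\sigma^2$, then sums over $k$ and drops the $j$-independent factor $\sigma^{-2}$ as in your Step 2. This direct route shows that $F_{j,k}=\sigma^{-2}(J_j)_k^2$ holds exactly at $\alpha_0$ under the additive Gaussian model even when $y_j$ is nonlinear in $\alpha_k$, so the linearization caveat in your Step 1 is not needed.
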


\begin{proof}
Fix a sensor index $j$ and an element index $k$.  Consider the scalar
parameter $\theta := \alpha_k$ while all other components of $\alpha$
are held fixed at their reference values.  For small perturbations
around $\alpha_0$, the measurement map is well approximated by its
first-order Taylor expansion
\[
  y_j(\theta)
  \approx y_j(\alpha_0)
          + \frac{\partial y_j}{\partial \alpha_k}\Big|_{\alpha_0}
            (\theta - \alpha_{0,k}).
\]
Define the scalar sensitivity
\[
  s_{j,k}
  := \frac{\partial y_j}{\partial \alpha_k}\Big|_{\alpha_0}
   = (J_j)_k.
\]

We model the measurement at sensor $j$ as the true (noise-free) model
prediction plus additive Gaussian noise,
\[
  y_j^{\mathrm{obs}} = y_j(\theta) + \eta_j,
  \qquad \eta_j \sim \mathcal{N}(0,\sigma^2),
\]
where $y_j(\theta)$ is the deterministic response predicted by the finite
element model for parameter value $\theta$.  
Since $y_j(\theta)$ is non-random for fixed $\theta$, the conditional density of
$y_j^{\mathrm{obs}}$ given $\theta$ is
\[
  p(y_j^{\mathrm{obs}}\mid\theta)
  = \frac{1}{\sqrt{2\pi\sigma^2}}
    \exp\!\left(
      -\frac{(y_j^{\mathrm{obs}} - y_j(\theta))^2}{2\sigma^2}
    \right),
\]
i.e.,
\[
  y_j^{\mathrm{obs}} \mid \theta
  \sim \mathcal{N}\!\big(y_j(\theta),\,\sigma^2\big).
\]

The log-likelihood (up to an additive constant independent of $\theta$) is
\[
  \ell(\theta\,;\,y_j^{\mathrm{obs}})
  = \log p(y_j^{\mathrm{obs}}\mid\theta)
  = -\frac{1}{2\sigma^2}
    \big(y_j^{\mathrm{obs}} - \mu(\theta)\big)^2,
\qquad
  \mu(\theta) := y_j(\theta).
\]
Differentiating with respect to $\theta$ gives
\[
  \frac{\partial \ell}{\partial \theta}
  = \frac{1}{\sigma^2}
    \big(y_j^{\mathrm{obs}} - \mu(\theta)\big)
    \frac{\partial \mu(\theta)}{\partial \theta}.
\]
At the reference point $\theta_0 = \alpha_{0,k}$, the derivative of the
mean is
\[
  \frac{\partial \mu(\theta)}{\partial \theta}\Big|_{\theta_0}
  = \frac{\partial y_j}{\partial \alpha_k}\Big|_{\alpha_0}
  = s_{j,k}.
\]
The Fisher information for the scalar parameter $\theta$ at $\theta_0$
is
\[
  F_{j,k}
  = \mathbb{E}\bigg[
      \bigg(\frac{\partial \ell}{\partial \theta}\Big|_{\theta_0}\bigg)^2
    \bigg]
  = \mathbb{E}\Big[
      \frac{1}{\sigma^4}
      \big(y_j^{\mathrm{obs}} - \mu(\theta_0)\big)^2
      s_{j,k}^2
    \Big],
\]
where the expectation is with respect to $y_j^{\mathrm{obs}}$ distributed
as $\mathcal{N}(\mu(\theta_0),\sigma^2)$.  Since
$\mathbb{E}[(y_j^{\mathrm{obs}} - \mu(\theta_0))^2] = \sigma^2$, we
obtain
\[
  F_{j,k}
  = \frac{1}{\sigma^4}\,\sigma^2\,s_{j,k}^2
  = \frac{1}{\sigma^2}\,s_{j,k}^2
  = \frac{1}{\sigma^2}
    \bigg(\frac{\partial y_j}{\partial \alpha_k}\Big|_{\alpha_0}\bigg)^2
  = \frac{1}{\sigma^2}\,(J_j)_k^2.
\]

Now suppose that the weakened element is unknown and may be any one of
$e=1,\dots,N_e$.  A natural robust design criterion for sensor $j$ is to
maximize the \emph{average} Fisher information over these possibilities,
\[
  \sum_{k=1}^{N_e} F_{j,k}
  = \frac{1}{\sigma^2} \sum_{k=1}^{N_e} (J_j)_k^2
  = \frac{1}{\sigma^2} \|J_j\|_2^2.
\]
Since $\sigma^2$ is common to all sensors, it does not affect the
argmax.  Therefore, the optimal single sensor location in this average
sense is
\[
  j^\star \in \arg\max_{j=1,\dots,m} \|J_j\|_2^2,
\]
which is precisely~\eqref{eq:one-sensor-rule}.
\end{proof}

\begin{remark}
\rm 
Proposition~\ref{prop:one-sensor} shows that, in the simple setting of
a single load case, one sensor, and elementwise stiffness scalings, the
general Fisher-information framework reduces to a transparent rule:
\emph{place the sensor where the measurement is simultaneously most
sensitive to the collection of elementwise weaknesses}.
In practice, this can be implemented by computing the row norms of the
Jacobian $J(S)$ (or equivalently of the state sensitivity matrix $U$),
and placing the sensor at the candidate location with maximal row norm.
This didactic example provides a useful sanity check for the more
general multi-load, multi-sensor formulations. 
\end{remark}

\begin{proposition}[Detectability-aware average Fisher criterion]
\label{prop:thresholded-fisher}
In the setting of Proposition~\ref{prop:one-sensor}, suppose that:
\begin{itemize}[leftmargin=2em]
  \item a minimal element-wise stiffness loss $\Delta\alpha_{\min}>0$ is of practical
        interest (e.g.\ a $5\%$ reduction), and
  \item the sensor cannot reliably distinguish measurement changes smaller than
        a resolution (or noise floor) $\delta_y>0$.
\end{itemize}
Then a perturbation $\Delta\alpha_k$ of element $k$ is practically detectable
by sensor $j$ only if
\[
  \big|\Delta y_{j,k}\big|
  \;\approx\;
  \bigg|\frac{\partial y_j}{\partial \alpha_k}\Big|_{\alpha_0}\bigg|
  \Delta\alpha_{\min}
  \;\gtrsim\; \delta_y.
\]
Equivalently, in terms of the scalar Fisher information
\[
  F_{j,k}
  = \frac{1}{\sigma^2}
    \bigg(\frac{\partial y_j}{\partial \alpha_k}\Big|_{\alpha_0}\bigg)^2,
\]
detectability requires
\begin{equation}
  F_{j,k} \;\gtrsim\; F_{\min}
  := \frac{1}{\sigma^2}
     \bigg(\frac{\delta_y}{\Delta\alpha_{\min}}\bigg)^2.
  \label{eq:Fmin-def}
\end{equation}
A detectability-aware average Fisher criterion for sensor $j$ is then
\begin{equation}
  \mathcal{J}_j^{\mathrm{trunc}}
  := \sum_{k=1}^{N_e} \max\{F_{j,k} - F_{\min},\,0\},
  \label{eq:J-trunc-def}
\end{equation}
and an optimal one-sensor location in this sense is
\begin{equation}
  j^\star \in \arg\max_{j=1,\dots,m} \mathcal{J}_j^{\mathrm{trunc}}.
  \label{eq:J-trunc-opt}
\end{equation}
\end{proposition}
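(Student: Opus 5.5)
The statement is essentially a modeling proposition, so the proof will be a short chain of elementary derivations rather than a deep argument. I will organize it in three steps, reusing the computation of $F_{j,k}$ from Proposition~\ref{prop:one-sensor}.

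\emph{Step 1 (linearized detectability condition).} Fix sensor $j$ and element $k$, and vary only $\theta=\alpha_k$ as in the proof of Proposition~\ref{prop:one-sensor}. The first-order Taylor expansion of $y_j$ about $\alpha_0$ gives, for a perturbation $\Delta\alpha_k$,
\[
\Delta y_{j,k} := y_j(\alpha_0+\Delta\alpha_k e_k)-y_j(\alpha_0)
= s_{j,k}\,\Delta\alpha_k + o(|\Delta\alpha_k|),
\qquad s_{j,k}=(J_j)_k .
\]
The smallest loss of practical interest is $|\Delta\alpha_k|=\Delta\alpha_{\min}$. Such a loss can only be reliably registered when the induced change exceeds the resolution $\delta_y$. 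This yields the necessary condition $|s_{j,k}|\,\Delta\alpha_{\min}\gtrsim\delta_y$, where ``$\gtrsim$'' absorbs the neglected higher-order term. I will state explicitly that this is a first-order, modeling-level statement, which is consistent with the use of $\approx$ and $\gtrsim$ in the proposition.

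\emph{Step 2 (equivalence with the Fisher threshold).} Both sides of the condition are nonnegative, so squaring preserves the inequality. Dividing by $\sigma^2\Delta\alpha_{\min}^2>0$ gives $s_{j,k}^2/\sigma^2\gtrsim \delta_y^2/(\sigma^2\Delta\alpha_{\min}^2)$. By Proposition~\ref{prop:one-sensor}, the left-hand side is exactly $F_{j,k}$, and the right-hand side is $F_{\min}$ as defined in \eqref{eq:Fmin-def}. Each step is reversible, so the two conditions are equivalent.

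\emph{Step 3 (the truncated criterion and its maximizer).} Define $\phi(t)=\max\{t-F_{\min},0\}$. This function vanishes exactly on the set of undetectable pairs $(j,k)$, namely those with $F_{j,k}\le F_{\min}$. On detectable pairs it equals the Fisher information in excess of the floor, and it is nondecreasing in $F_{j,k}$. Summing over $k$, exactly as in the averaging argument of Proposition~\ref{prop:one-sensor}, gives $\mathcal{J}_j^{\mathrm{trunc}}$. This quantity is therefore the natural detectability-aware analogue of $\sum_k F_{j,k}$; the constant factor $1/N_e$ is irrelevant for the argmax. Since $j$ ranges over the finite set $\{1,\dots,m\}$, a maximizer exists, so \eqref{eq:J-trunc-opt} is well defined. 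I will also note that when $F_{\min}\to0$ the criterion reduces to the one in \eqref{eq:one-sensor-rule}.

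\emph{Main obstacle.} There is no real mathematical difficulty. The delicate point is Step 1: it must be made clear that the condition is \emph{necessary} only up to first order, since the remainder term is neglected. It must also be clear that the choice of truncation in \eqref{eq:J-trunc-def} is a justified design choice rather than a derived optimum. I will phrase this part as a justification of the criterion, not as a uniqueness claim.
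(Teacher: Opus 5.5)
Your proposal is correct and takes essentially the same route as the paper. Both linearize to obtain $|s_{j,k}|\,\Delta\alpha_{\min}\gtrsim\delta_y$, square and divide by $\sigma^2$ to reach $F_{j,k}\gtrsim F_{\min}$, and then present the truncated sum \eqref{eq:J-trunc-def} as a design choice that discards sub-threshold contributions; your extra remarks (reversibility of the squaring step, existence of a maximizer over the finite index set, and the $F_{\min}\to 0$ limit recovering \eqref{eq:one-sensor-rule}) are harmless refinements that the paper leaves implicit.
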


\begin{proof}
By the first-order approximation,
\[
  \Delta y_{j,k}
  \;\approx\;
  \frac{\partial y_j}{\partial \alpha_k}\Big|_{\alpha_0}\,\Delta\alpha_{\min}.
\]
Requiring $\big|\Delta y_{j,k}\big|\gtrsim\delta_y$ gives
\[
  \bigg|
    \frac{\partial y_j}{\partial \alpha_k}\Big|_{\alpha_0}
  \bigg|
  \;\gtrsim\;
  \frac{\delta_y}{\Delta\alpha_{\min}}.
\]
Squaring both sides and multiplying by $1/\sigma^2$ yields
\[
  \frac{1}{\sigma^2}
  \bigg(
    \frac{\partial y_j}{\partial \alpha_k}\Big|_{\alpha_0}
  \bigg)^2
  \;\gtrsim\;
  \frac{1}{\sigma^2}
  \bigg(\frac{\delta_y}{\Delta\alpha_{\min}}\bigg)^2
  = F_{\min},
\]
which is precisely \eqref{eq:Fmin-def}.  Hence elements with
$F_{j,k} < F_{\min}$ correspond to parameter perturbations that produce
measurement changes below the effective resolution and are practically
undetectable by sensor $j$.

To discount such contributions in the design criterion, we replace the
raw sum $\sum_k F_{j,k}$ by the truncated sum
\[
  \mathcal{J}_j^{\mathrm{trunc}}
  = \sum_{k=1}^{N_e} \max\{F_{j,k} - F_{\min},\,0\}.
\]
For any $k$ with $F_{j,k} \le F_{\min}$, the contribution is zero; for
$F_{j,k} > F_{\min}$, only the excess over the detectability threshold
$F_{\min}$ is counted.  Maximizing $\mathcal{J}_j^{\mathrm{trunc}}$ over
$j=1,\dots,m$ therefore selects sensors that are not only sensitive in
an average Fisher sense, but also provide practically detectable signal
levels for as many elements as possible.  This yields the optimality
condition \eqref{eq:J-trunc-opt}.
\end{proof}

\begin{remark}[Count-based and smooth variants]
\rm
The truncated average Fisher criterion in \eqref{eq:J-trunc-def} still
weights elements by how far above the detectability threshold $F_{\min}$
they are.  In some situations it is more natural to \emph{ignore} this
magnitude and simply count how many elements are detectable at all.

To this end, we introduce the count-based criterion
\[
  \mathcal{J}_j^{\mathrm{count}}
  := \sum_{k=1}^{N_e}
      \mathbf{1}\{F_{j,k} \ge F_{\min}\},
\]
where $\mathbf{1}\{\cdot\}$ is the indicator function.  For a fixed
sensor location $j$, the quantity $\mathcal{J}_j^{\mathrm{count}}$ is
precisely the number of elements $k$ for which the Fisher information
$F_{j,k}$ exceeds the minimal detectable level $F_{\min}$.  Thus
$\mathcal{J}_j^{\mathrm{count}}$ selects the sensor that can
\emph{reliably see} the largest number of potentially weakened elements,
which matches the heuristic ``place the sensor where it can see the
largest number of elements.''
\end{remark}

\paragraph{Bridge to examples.}
The scalar Fisher information formula and the one-sensor rule provide a simple and powerful way to interpret 
sensor placement principles.  When each parameter represents a localized effect 
(e.g.\ elementwise stiffness~$\alpha_k$), the row 
$J_j$ describes how the measurement at sensor $j$ responds to perturbations of 
each location-specific parameter.  Thus, $\|J_j\|_2^2$ captures how many 
parameters sensor~$j$ is sensitive to and how strongly.

In the examples below, we compute $J$ explicitly for several classical models 
(1D bar, strain sensors, etc.).  
Because these systems are simple, the Fisher information and the optimal sensor 
locations can be obtained in \emph{closed form}, making the theoretical rules 
directly interpretable for engineering intuition.

\subsection{Worked example: 1D bar with 10 elements}

We illustrate the ideas of Proposition~\ref{prop:one-sensor} in a simple
one-dimensional setting where all quantities can be written in closed
form.

Consider an axially loaded bar of length $L>0$ with constant cross-sectional
area $A$ and Young's modulus $E$.  We discretize the bar into
$N_e = 10$ equal elements of length
\[
  \ell = L/N_e.
\]
Let $x_0=0<x_1<\cdots<x_{10}=L$ denote the node positions, with
$x_j = j\ell$.
We assume the left end is clamped and the right end is subjected to a
tensile load $P>0$:
\[
  u(0) = 0, \qquad EA\,u'(L) = P.
\]
Let the candidate displacement sensors are located at the nodes
$x_j$, $j=1,\dots,10$, i.e.\ at all nodes except the clamped left end
$x_0$ where $u_0 = 0$ is prescribed and therefore uninformative for
damage detection.

\paragraph{Elementwise weakening parameters.}
We introduce elementwise stiffness scaling coefficients
\[
  \alpha = (\alpha_1,\dots,\alpha_{10}) \in [\alpha_{\rm min},1]^{10},
\]
with $\alpha_e=1$ corresponding to the undamaged element $e$ and
$\alpha_e<1$ representing weakening (loss of stiffness) on that element.
In a standard two-node linear finite element discretization, each element
$e$ acts as a spring of stiffness
\[
  k_e = \alpha_e\,\frac{EA}{\ell},
\]
and the bar behaves as a series connection of these springs under the
end load $P$.

The extension of element $e$ under the load $P$ is
\[
  \delta_e
  = \frac{P}{k_e}
  = \frac{P\ell}{\alpha_e EA},
\]
so that the displacement at node $j$ (counted from the clamped left end)
is the cumulative extension of the first $j$ elements:
\begin{equation}
  u_j(\alpha)
  := u(x_j)
  = \sum_{e=1}^j \delta_e
  = \frac{P\ell}{EA} \sum_{e=1}^j \frac{1}{\alpha_e},
  \qquad j = 0,1,\dots,10,
  \label{eq:1D-u-alpha}
\end{equation}
with $u_0 = 0$.

\paragraph{Sensor model and Jacobian.}
We consider candidate displacement sensors located at the nodes
$x_j$, $j=1,\dots,10$.  A sensor placed at node $x_j$ measures
\[
  y_j(\alpha) := u_j(\alpha),
\]
so the measurement operator for sensor $j$ is simply the row vector
$M_j = e_j^\top$, where $e_j$ is the $j$-th standard basis vector in
$\mathbb{R}^{11}$ (the full nodal displacement vector).

We take the reference parameter to be the undamaged bar,
\[
  \alpha_0 = (1,\dots,1),
\]
and linearize the measurements around $\alpha_0$.
Differentiating \eqref{eq:1D-u-alpha} with respect to $\alpha_k$ gives
\[
  \frac{\partial u_j(\alpha)}{\partial \alpha_k}
  = \frac{P\ell}{EA} \frac{\partial}{\partial \alpha_k}
    \Big( \sum_{e=1}^j \frac{1}{\alpha_e} \Big)
  =
  \begin{cases}
    -\,\dfrac{P\ell}{EA}\,\dfrac{1}{\alpha_k^2}, & k \le j,\\[6pt]
    0, & k > j.
  \end{cases}
\]
Evaluating at $\alpha=\alpha_0$ (i.e.\ $\alpha_k=1$) yields
\begin{equation}
  \frac{\partial u_j}{\partial \alpha_k}\Big|_{\alpha_0}
  =
  \begin{cases}
    -\,c, & k \le j,\\[4pt]
    0, & k > j,
  \end{cases}
  \qquad
  c := \dfrac{P\ell}{EA}.
  \label{eq:1D-sens-entry}
\end{equation}
Thus, the Jacobian row for sensor $j$ is
\begin{equation}
  J_j
  := \frac{\partial y_j}{\partial \alpha}\Big|_{\alpha_0}
  = \big( (J_j)_1,\dots,(J_j)_{10} \big)
  = \big( -c,\dots,-c,0,\dots,0 \big),
  \label{eq:1D-J-row}
\end{equation}
with the first $j$ entries equal to $-c$ and the remaining $10-j$
entries equal to zero.
In other words, a sensor at node $x_j$ is sensitive to weakening in
elements $1,\dots,j$, but completely insensitive to elements
$k>j$ that lie to its right.

\paragraph{Average Fisher information for single-element weakening.}
As in Proposition~\ref{prop:one-sensor}, we now assume that at most one
element is weakened at a time, i.e.\ we consider scalar perturbations
of individual $\alpha_k$ while all others remain fixed.  We model the
sensor noise at node $x_j$ as
\[
  y_j^{\mathrm{obs}}
  = y_j(\alpha^\star) + \eta,
  \qquad
  \eta \sim \mathcal{N}(0,\sigma^2).
\]
For fixed element index $k$, the scalar Fisher information for $\alpha_k$
obtained from sensor $j$ is
\[
  F_{j,k}
  = \frac{1}{\sigma^2}
    \bigg(\frac{\partial y_j}{\partial \alpha_k}\Big|_{\alpha_0}\bigg)^2
  = \frac{1}{\sigma^2} (J_j)_k^2.
\]
Using \eqref{eq:1D-sens-entry}, this becomes
\[
  F_{j,k}
  =
  \begin{cases}
    \dfrac{c^2}{\sigma^2}, & k \le j,\\[6pt]
    0, & k > j.
  \end{cases}
\]
If we aim to design a single sensor that is, on average, informative
for detecting weakening in any one of the ten elements, a natural
criterion is the average Fisher information
\[
  \sum_{k=1}^{10} F_{j,k}
  = \frac{c^2}{\sigma^2} \sum_{k=1}^{10}
    \mathbf{1}\{k \le j\}
  = \frac{c^2}{\sigma^2}\, j.
\]
Since $c^2/\sigma^2$ is independent of $j$, the optimal sensor location
in this sense is
\[
  j^\star \in \arg\max_{j=1,\dots,10} j
  = \{10\},
\]
i.e.\ the rightmost node $x_{10} = L$.
At this location, the sensor is sensitive to weakening in \emph{all}
ten elements, whereas a sensor at node $x_j$ only ``sees'' elements
$1,\dots,j$.

This explicit computation confirms the intuitive rule discussed in
Proposition~\ref{prop:one-sensor}: in this simple 1D bar, placing the
sensor at the location with the largest displacement---here, the free
end---maximizes the number of elements that are detectable via that
sensor and maximizes the average Fisher information for single-element
weakening.

\begin{remark}[Thresholded variants]
\rm 
If the sensor has a finite resolution $\delta_y$ and only stiffness
losses $\Delta\alpha_{\min}$ above a certain size are of interest,
one can introduce a detectability threshold $F_{\min}$ as in
Proposition~\ref{prop:thresholded-fisher} and count only those elements
for which $F_{j,k} \ge F_{\min}$.  In the present example, for a given
sensor position $j$, either all elements $1,\dots,j$ are detectable
(if $c^2/\sigma^2 \ge F_{\min}$), or none are.  Thus the thresholded
criteria reduce to the same rule: place the sensor at the free end
$x_{10}$ to maximize the number of elements whose weakening is both
seen by the sensor and above the detectability threshold.
\end{remark}

\subsection{1D bar with one strain sensor}

We now revisit the 1D bar of the previous subsection, but instead of
measuring nodal displacements we place a single \emph{strain} sensor on
one element.

\paragraph{Element strains.}
The extension of element $e$ under the load $P$ is
\[
  \delta_e(\alpha)
  = \frac{P}{k_e}
  = \frac{P\ell}{\alpha_e EA},
\]
so that the (axial) strain in element $e$ is
\begin{equation}
  \varepsilon_e(\alpha)
  = \frac{\delta_e(\alpha)}{\ell}
  = \frac{P}{\alpha_e EA}.
  \label{eq:1D-strain-alpha}
\end{equation}
Thus $\varepsilon_e$ depends only on its own stiffness factor $\alpha_e$:
weakening in other elements does not enter the strain in element $e$ in
this simple series model.

\paragraph{Single strain sensor.}
We now consider a single strain sensor placed in element $r$,
$1 \le r \le N_e$, which measures
\[
  y_r(\alpha) := \varepsilon_r(\alpha)
  = \frac{P}{\alpha_r EA}.
\]
As before, we take the reference parameter to be the undamaged bar,
\[
  \alpha_0 = (1,\dots,1),
\]
and linearize $y_r(\alpha)$ with respect to $\alpha$ at $\alpha_0$.

From \eqref{eq:1D-strain-alpha}, the partial derivatives are
\[
  \frac{\partial y_r(\alpha)}{\partial \alpha_k}
  = \frac{\partial}{\partial \alpha_k}
    \Big( \frac{P}{\alpha_r EA} \Big)
  =
  \begin{cases}
    -\,\dfrac{P}{EA}\,\dfrac{1}{\alpha_r^2}, & k=r,\\[6pt]
    0, & k\neq r,
  \end{cases}
\]
since $y_r$ depends only on $\alpha_r$.
Evaluating at $\alpha=\alpha_0$ (so $\alpha_r=1$) yields
\begin{equation}
  \frac{\partial y_r}{\partial \alpha_k}\Big|_{\alpha_0}
  =
  \begin{cases}
    -\,c_s, & k=r,\\[4pt]
    0, & k\neq r,
  \end{cases}
  \qquad
  c_s := \dfrac{P}{EA}.
  \label{eq:1D-strain-sens-entry}
\end{equation}
Thus the Jacobian row associated with the single strain sensor in
element $r$ is
\begin{equation}
  J_r
  := \frac{\partial y_r}{\partial \alpha}\Big|_{\alpha_0}
  = (0,\dots,0,\underbrace{-c_s}_{k=r},0,\dots,0)
  \in \mathbb{R}^{1\times N_e},
  \label{eq:1D-strain-J-row}
\end{equation}
i.e., a vector with a single nonzero entry at position $k=r$.

In particular, the sensor in element $r$ is sensitive only to weakening
in that element and completely insensitive to weakening in all other
elements, in contrast to the displacement sensor at node $x_j$, whose
response depends on all elements $1,\dots,j$.

\paragraph{Fisher information for single-element weakening.}
As in Proposition~\ref{prop:one-sensor}, suppose that at most one
element is weakened at a time, and we are interested in detecting
deviations of a single scalar parameter $\alpha_k$ from its reference
value, with all other components held fixed.

For fixed $k$, the scalar Fisher information for $\alpha_k$ provided by
this single measurement is (cf.\ Proposition~\ref{prop:one-sensor})
\[
  F_{r,k}
  = \frac{1}{\sigma^2}
    \bigg(
      \frac{\partial y_r}{\partial \alpha_k}\Big|_{\alpha_0}
    \bigg)^2
  = \frac{1}{\sigma^2}(J_r)_k^2.
\]
Using \eqref{eq:1D-strain-sens-entry}, we obtain
\[
  F_{r,k}
  =
  \begin{cases}
    \dfrac{c_s^2}{\sigma^2}, & k=r,\\[6pt]
    0, & k\neq r.
  \end{cases}
\]
In other words, a strain sensor in element $r$ provides nonzero Fisher
information only about the stiffness factor $\alpha_r$ of that same
element; it carries no information about $\alpha_k$ for $k\neq r$.

\paragraph{Average information over unknown weakened element.}
In the displacement-based 1D example, we considered the situation where
\emph{any one} of the $N_e$ elements might be weakened, with all indices
\emph{a priori} equally likely, and used the average Fisher information
\[
  \sum_{k=1}^{N_e} F_{j,k}
\]
as a simple robustness criterion for comparing different single-sensor
locations $j$.
This led to the rule ``place the displacement sensor at the free end''
because that location is simultaneously sensitive to all elements.

For the single strain sensor in element $r$, the analogous average
criterion is
\[
  \mathcal{J}_r^{\mathrm{avg}}
  := \sum_{k=1}^{N_e} F_{r,k}.
\]
Using the expression for $F_{r,k}$ above,
\[
  \mathcal{J}_r^{\mathrm{avg}}
  = \sum_{k=1}^{N_e} F_{r,k}
  = F_{r,r}
  = \frac{c_s^2}{\sigma^2}
  = \frac{1}{\sigma^2}
    \Big(\frac{P}{EA}\Big)^2,
\]
which is \emph{independent} of the element index $r$.

Thus, under the ``single unknown weakened element'' hypothesis and
equal prior weight on each element, all elementwise strain sensor
locations are equivalent in this simple 1D bar: any single strain sensor
provides the same average Fisher information about which element might
be weakened.

\begin{remark}[Comparison with displacement sensing]
\rm
In the displacement-sensor example, the Jacobian row for a sensor at
node $x_j$ had the structure
\[
  J_j = (-c,\dots,-c,0,\dots,0),
\]
so that the sensor at the free end $x_{10}$ was simultaneously
sensitive to the weakening of \emph{all} ten elements, and the average
Fisher information increased with $j$.
In contrast, the strain sensor Jacobian \eqref{eq:1D-strain-J-row} has
exactly one nonzero entry, so each strain sensor ``sees'' only its own
element.
From the point of view of average information over an unknown weakened
element, no particular element is distinguished.

This contrast illustrates a general principle for structural digital
twins: displacement (or rotation) measurements tend to integrate
information over multiple elements, while strain measurements
are local.
In practice, combining a few global displacement sensors with local
strain sensors can provide both wide coverage and sharp localization of
weaknesses.
\end{remark}

\section{Multiple Displacement Sensors on the 1D Bar: Detectability vs.\ Localizability}
\label{sec:multi-sensor-1dbar}

In the single-sensor case we showed that, for the
1D axial bar with elementwise weakening parameters
$\alpha=(\alpha_1,\dots,\alpha_{10})$ and a \emph{single displacement sensor},
the average Fisher-information criterion selects the free end $x_{10}=L$.
This is a \emph{detectability} result: a sensor at node $j$ is sensitive to
weakening in elements $1,\dots,j$, and the free-end displacement is sensitive
to \emph{all} elements.

When \emph{multiple} sensors are available, the design goal changes.
Once a sensor at $x_{10}$ is installed, \emph{every} single-element weakening
remains detectable; additional sensors do not increase the set of detectable
elements.  Their main purpose is to improve \emph{localizability} (spatial
resolution): to distinguish \emph{where} along the bar the weakening occurred.
This section makes this distinction precise and derives a closed-form solution
for the D-optimal placement of multiple displacement sensors in the 1D bar.

\subsection{Setup: Jacobian rows and Fisher matrix for a sensor set}

We consider candidate displacement sensors at the free nodes
$j\in\{1,\dots,10\}$ (node $j=0$ is clamped and measures $u_0\equiv 0$).
A sensor at node $j$ measures
\[
  y_j(\alpha) := u_j(\alpha).
\]
At the reference (undamaged) configuration $\alpha_0=(1,\dots,1)$ we derived
the sensitivities
\begin{equation}
  \frac{\partial u_j}{\partial \alpha_k}\Big|_{\alpha_0}
  =
  \begin{cases}
    -c, & k \le j,\\
    0,  & k > j,
  \end{cases}
  \qquad c:=\dfrac{P\ell}{EA},
  \label{eq:1dbar-sens}
\end{equation}
so that the Jacobian row for sensor $j$ is the ``prefix'' vector
\begin{equation}
  J_j := \frac{\partial y_j}{\partial \alpha}\Big|_{\alpha_0}
       = (-c)\,( \underbrace{1,\dots,1}_{j \text{ entries}},0,\dots,0)
  \in \mathbb{R}^{10}.
  \label{eq:1dbar-Jrow}
\end{equation}

Let $S=\{j_1<\cdots<j_m\}\subset\{1,\dots,10\}$ be a set of $m$ sensor nodes and
let $J(S)\in\mathbb{R}^{m\times 10}$ be the stacked Jacobian with rows
$J_{j_p}$.  Under i.i.d.\ noise with variance $\sigma^2$ (so $R=\sigma^2 I$),
the Fisher information matrix is
\begin{equation}
  F(S) = J(S)^\top R^{-1} J(S)
       = \frac{1}{\sigma^2} J(S)^\top J(S).
  \label{eq:F-gram}
\end{equation}
Thus, up to the constant factor $\sigma^{-2}$, D-optimal design maximizes the
determinant of the Gram matrix $J(S)^\top J(S)$. 

\paragraph{Key observation (localization requires non-redundant rows).}
If two sensors are placed at the same node, they produce identical rows and do
not increase the rank of $J(S)$.  Even if sensors are distinct, if their rows
are nearly linearly dependent, then $\det(J(S)^\top J(S))$ is small and the
configuration provides poor localization.  Hence multiple-sensor design in this
example is fundamentally about choosing sensor nodes so that the prefix vectors
in \eqref{eq:1dbar-Jrow} ``separate'' the elements.

\subsection{Closed form for the D-optimal objective}

The inner products of prefix vectors are explicit.  For $p,q\in\{1,\dots,m\}$,
\[
  J_{j_p}\cdot J_{j_q}
  = c^2 \min(j_p,j_q).
\]
Define the $m\times m$ matrix
\begin{equation}
  K(S) := \big[\min(j_p,j_q)\big]_{p,q=1}^m.
  \label{eq:Kmin}
\end{equation}
Then
\begin{equation}
  J(S)J(S)^\top = c^2 K(S),
  \qquad
  \det\!\big(J(S)J(S)^\top\big) = c^{2m}\det K(S),
  \label{eq:JJt}
\end{equation}
and (since $\det(J^\top J)=\det(JJ^\top)$ for rectangular $J$ of full row rank)
D-optimality for fixed $m$ is equivalent to maximizing $\det K(S)$.

\begin{lemma}[Determinant of the ``min'' matrix]
\label{lem:min-det}
Let $0=j_0<j_1<\cdots<j_m$ and define $K=[\min(j_p,j_q)]_{p,q=1}^m$.
Then
\begin{equation}
  \det K = \prod_{p=1}^m (j_p-j_{p-1}).
  \label{eq:min-det-formula}
\end{equation}
\end{lemma}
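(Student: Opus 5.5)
The plan is to exhibit $K$ as a Gram matrix with a triangular factor, so that its determinant can be read off from a diagonal. Set $d_p := j_p - j_{p-1} > 0$ for $p=1,\dots,m$, and let $L\in\mathbb{R}^{m\times m}$ be the lower-triangular matrix of ones, $L_{pr}=1$ for $r\le p$ and $0$ otherwise. Let $D=\operatorname{diag}(d_1,\dots,d_m)$. We will show that
\[
  K = L\,D\,L^\top .
\]
The proof then closes with $\det K = \det(L)^2\det D = \prod_{p=1}^m d_p$, because $L$ is unit lower triangular and so $\det L = 1$. This is exactly \eqref{eq:min-det-formula}.

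To verify the factorization, compute the $(p,q)$ entry:
\[
  (LDL^\top)_{pq} = \sum_{r=1}^m L_{pr}\, d_r\, L_{qr} = \sum_{r\le \min(p,q)} d_r .
\]
This sum telescopes to $j_{\min(p,q)} - j_0 = j_{\min(p,q)}$. Since the $j_p$ are strictly increasing, $j_{\min(p,q)} = \min(j_p,j_q)$, which is $K_{pq}$. This mirrors the prefix-vector structure \eqref{eq:1dbar-Jrow}: the row $p$ of $L D^{1/2}$ acts like a prefix vector in increment coordinates.

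As an alternative check, one can use elimination. Subtract row $p-1$ from row $p$ for $p=m,m-1,\dots,2$ in that order. Row $p$ of $K$ minus row $p-1$ has entries $\min(j_p,j_q)-\min(j_{p-1},j_q)$. These equal $0$ for $q\le p-1$ and $d_p$ for $q\ge p$. The result is upper triangular with diagonal $d_1,\dots,d_m$, and these row operations leave the determinant unchanged.

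I expect no real obstacle. The only point needing care is the index bookkeeping in the telescoping step and in the identity $j_{\min(p,q)}=\min(j_p,j_q)$, which uses strict monotonicity together with the convention $j_0=0$. As a byproduct, $K$ is positive definite whenever all $d_p>0$.
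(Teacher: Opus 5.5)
Your proof is correct and is essentially the paper's argument. The paper also writes $K = ADA^\top$, with $A$ the unit lower-triangular matrix of ones and $D=\mathrm{diag}(\Delta_1,\dots,\Delta_m)$, checks the entries through $\sum_{r\le\min(p,q)}\Delta_r = j_{\min(p,q)}=\min(j_p,j_q)$, and concludes from $\det A = 1$; your row-elimination check is also valid but not needed.
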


\begin{proof}
Let $\Delta_p := j_p-j_{p-1}>0$. Note that $j_p$ denotes the physical node index of the $p$th sensor,
and $\Delta_p := j_p - j_{p-1}$ is the spacing between consecutive sensors.
 
Define the unit lower-triangular matrix
$A\in\mathbb{R}^{m\times m}$ by
\[
  A_{p r} :=
  \begin{cases}
    1, & r\le p,\\
    0, & r>p.
  \end{cases}
\]
Let $D:=\mathrm{diag}(\Delta_1,\dots,\Delta_m)$.  A direct computation shows
that
\[
  (ADA^\top)_{pq} = \sum_{r=1}^{\min(p,q)} \Delta_r
                 = j_{\min(p,q)}
                 = \min(j_p,j_q),
\]
hence $K = A D A^\top$.  Since $A$ is unit lower-triangular, $\det A=1$, so
\[
  \det K = \det(A)\det(D)\det(A^\top) = \det(D) = \prod_{p=1}^m \Delta_p
  = \prod_{p=1}^m (j_p-j_{p-1}),
\]
which is \eqref{eq:min-det-formula}.
\end{proof}

Lemma~\ref{lem:min-det} shows that, for the 1D bar, D-optimal sensor placement
reduces to a discrete ``max-product of increments'' problem.

\subsection{D-optimal sensor locations: equal increments (uniform spacing)}

We now solve the D-optimal placement problem for a fixed number $m$ of sensors.
Let $S=\{j_1<\cdots<j_m\}\subset\{1,\dots,10\}$ and set $j_0:=0$ and
$\Delta_p:=j_p-j_{p-1}\in\mathbb{N}$.
By Lemma~\ref{lem:min-det},
\[
  \det K(S)
  = \prod_{p=1}^m (j_p-j_{p-1})
  = \prod_{p=1}^m \Delta_p,
  \qquad
  \sum_{p=1}^m \Delta_p = j_m \le 10.
\]
In particular, maximizing $\det K(S)$ over sensor sets $S$ is
equivalent to maximizing the product of positive integer increments
$\prod_{p=1}^m \Delta_p$ subject to $\sum_{p=1}^m \Delta_p\le 10$.

Because the product $\prod_{p=1}^m \Delta_p$ strictly increases
when the total sum $\sum_{p=1}^m\Delta_p$ increases (keeping all other increments
fixed and increasing one of them), any maximizer must saturate the constraint
$j_m=\sum_{p=1}^m \Delta_p = 10$.  Hence the free end is always included in a
D-optimal set.

\begin{theorem}[Closed-form D-optimal placement on the 1D bar]
\label{thm:Dopt-1dbar}
Fix $m\in\{1,\dots,10\}$ and consider displacement sensors at nodes
$\{1,\dots,10\}$ with i.i.d.\ noise variance $\sigma^2$.
Let $S=\{j_1<\cdots<j_m\}$ be the chosen sensor set.
Then D-optimal sensor placement (maximizing $\log\det F(S)$) is achieved by
choosing $j_m=10$ and making the increments
\[
  \Delta_p := j_p-j_{p-1},\quad j_0:=0,
\]
as equal as possible, i.e.\ all increments differ by at most $1$.

Concretely, let
\[
  q := \left\lfloor \frac{10}{m} \right\rfloor,
  \qquad
  r := 10 - m q \in \{0,1,\dots,m-1\}.
\]
Then any choice of positive integers $(\Delta_1,\dots,\Delta_m)$ satisfying
\[
  \Delta_p \in \{q,q+1\},
  \qquad
  \#\{p:\Delta_p=q+1\}=r,
  \qquad
  \sum_{p=1}^m \Delta_p = 10,
\]
yields a D-optimal design via
\[
  j_p = \sum_{\ell=1}^p \Delta_\ell,
  \qquad p=1,\dots,m,
\]
so that the sensor nodes $j_p$ are (up to integer rounding) uniformly spaced
on $\{1,\dots,10\}$ and always include the free end $j_m=10$.
\end{theorem}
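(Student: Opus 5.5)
We first reduce the design problem to a purely combinatorial one using the facts already established. By \eqref{eq:F-gram} and \eqref{eq:JJt}, for any set $S$ of $m$ distinct nodes the rows $J_{j_p}$ are prefix vectors of distinct lengths. They are therefore linearly independent, so $J(S)$ has full row rank $m$. For $m\le 10$, however, $F(S)\in\mathbb{R}^{10\times 10}$ has rank $m$, so $\det F(S)=0$ whenever $m<10$. We therefore interpret ``$\log\det F(S)$'' as the log of the nonzero-eigenvalue product, i.e.\ $\log\det(J(S)J(S)^\top/\sigma^2)$. This is precisely the reduction stated before Lemma~\ref{lem:min-det}, and we make it explicit at the outset. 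With this, $\log\det F(S) = m\log(c^2/\sigma^2) + \log\det K(S)$. By Lemma~\ref{lem:min-det}, maximizing it amounts to maximizing $\prod_{p=1}^m \Delta_p$ over positive integers with $\sum_p\Delta_p=j_m\le 10$. The map $S\mapsto(\Delta_1,\dots,\Delta_m)$ is a bijection onto such tuples.

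The first step is to show $j_m=10$ at any maximizer. If $\sum_p\Delta_p<10$, increasing $\Delta_m$ by one keeps the tuple admissible. Since all factors are positive, this multiplies the product by $(\Delta_m+1)/\Delta_m>1$. Hence every maximizer has $\sum_p\Delta_p=10$, which is the remark preceding the theorem.

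The second step is a balancing (exchange) argument. Suppose a maximizing tuple has two increments with $\Delta_a\ge\Delta_b+2$. Replace them by $\Delta_a-1$ and $\Delta_b+1$; the sum is unchanged and all entries stay positive. The key computation is
\[
(\Delta_a-1)(\Delta_b+1)-\Delta_a\Delta_b=\Delta_a-\Delta_b-1\ge 1>0 .
\]
Since the other factors are positive, the product strictly increases, contradicting maximality. So every maximizer has all increments within $1$ of each other. A maximizer exists because the admissible set is finite and nonempty, as $m\le 10$.

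The final step characterizes the balanced tuples. If all $\Delta_p\in\{t,t+1\}$ with sum $10$, then $mt\le 10<m(t+1)$, or $10=m(t+1)$ when all increments equal $t+1$. In either case the multiset of increments is forced: $r$ copies of $q+1$ and $m-r$ copies of $q$, with $q=\lfloor 10/m\rfloor$ and $r=10-mq$. Every balanced tuple therefore has the same product $(q+1)^r q^{m-r}$. Since some maximizer is balanced, every balanced tuple attains the maximum, regardless of the order of the increments. Setting $j_p=\sum_{\ell\le p}\Delta_\ell$ then gives the stated designs, with $j_m=10$. The only delicate point is the rank/determinant interpretation in the first paragraph; everything else is the elementary exchange argument above.
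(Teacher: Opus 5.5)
Your proof is correct and follows the same route as the paper. Both reduce via Lemma~\ref{lem:min-det} to maximizing $\prod_p \Delta_p$, force $j_m=10$ by enlarging the last increment, and use the exchange $(\Delta_a,\Delta_b)\mapsto(\Delta_a-1,\Delta_b+1)$ to get balanced increments. Your additions tighten the paper's argument without changing the method. The paper justifies the reduction by $\det(J^\top J)=\det(JJ^\top)$ for full-row-rank $J$, which is false for $m<10$ because $F(S)$ is then singular, so your pseudo-determinant reading is the correct fix. Your further observation that every balanced tuple with sum $10$ has the same multiset of increments, and hence the same product, is what actually justifies the ``any choice \dots\ yields a D-optimal design'' conclusion that the paper leaves implicit.
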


\begin{proof}
By Lemma~\ref{lem:min-det}, maximizing $\det K(S)$ over strictly increasing
integers $1\le j_1<\cdots<j_m\le 10$ is equivalent to maximizing
\[
  \prod_{p=1}^m \Delta_p
  \quad\text{subject to}\quad
  \Delta_p\in\mathbb{N},\ \Delta_p\ge 1,\ \sum_{p=1}^m \Delta_p = j_m \le 10.
\]

\paragraph{Step 1: saturation ($j_m=10$).}
If $j_m<10$, then increasing the last increment $\Delta_m\mapsto \Delta_m+1$
keeps feasibility and strictly increases the product, since
$\prod_{p=1}^m \Delta_p$ is multiplied by $(\Delta_m+1)/\Delta_m>1$.
Hence any maximizer must satisfy $j_m=10$, i.e.\ $\sum_{p=1}^m \Delta_p=10$.

\paragraph{Step 2: equal increments (discrete balancing).}
With $\sum_{p=1}^m \Delta_p=10$ fixed, the product is maximized when the
increments are as equal as possible.
Indeed, suppose there exist indices $a,b$ such that
$\Delta_a \ge \Delta_b + 2$.
Replacing $(\Delta_a,\Delta_b)$ by $(\Delta_a-1,\Delta_b+1)$ preserves the
sum and yields
\[
  (\Delta_a-1)(\Delta_b+1) - \Delta_a\Delta_b
  = \Delta_a - \Delta_b - 1 \ge 1,
\]
so the product strictly increases.
Hence any maximizer cannot contain such a pair.
Repeatedly applying this balancing step terminates at a configuration in which
all increments differ by at most $1$.
Writing
$
  q := \left\lfloor \frac{10}{m} \right\rfloor,
$
the condition that all increments differ by at most $1$ implies that each
$\Delta_p$ can take only one of the two values $q$ or $q+1$.
Let $r$ denote the number of indices $p\in\{1,\dots,m\}$ for which
$\Delta_p = q+1$.
Then the remaining $m-r$ increments equal $q$, and the sum constraint becomes
\[
  \sum_{p=1}^m \Delta_p
  = r(q+1) + (m-r)q
  = mq + r.
\]
Since $\sum_{p=1}^m \Delta_p = 10$ by construction, we must have
\[
  mq + r = 10,
  \qquad\text{hence}\qquad
  r = 10 - mq.
\]
Because $q=\lfloor 10/m\rfloor$, it follows that $0\le r < m$, so this choice of
$r$ is admissible and uniquely determined.
Consequently, the optimal increment sequence consists of exactly $r$ entries
equal to $q+1$ and $m-r$ entries equal to $q$, yielding the stated
characterization.
\end{proof}

\begin{remark}[Explicit sensor sets for small $m$]
\rm
For illustration, one convenient ``integer-uniform'' choice is to place the
larger increments $q+1$ as evenly as possible among the $m$ gaps.  This gives:
\begin{itemize}[leftmargin=2em]
\item $m=1$: $S^\star=\{10\}$;
\item $m=2$: $S^\star=\{5,10\}$ (increments $5,5$);
\item $m=3$: $S^\star=\{3,6,10\}$ (increments $3,3,4$) (equivalently $\{4,7,10\}$ with increments $4,3,3$);
\item $m=4$: $S^\star=\{2,5,7,10\}$ (increments $2,3,2,3$).
\end{itemize}
Other D-optimal sets are obtained by permuting the multiset of
increments (the determinant depends only on $\{\Delta_p\}_{p=1}^m$).
\end{remark}

\bibliographystyle{plain}
\bibliography{refs}

\end{document}